\documentclass[12pt,reqno]{amsart}
\usepackage{amsmath,amsfonts,amssymb,amscd,amsthm,amsbsy,epsf}
\usepackage[dvips]{graphicx}
\usepackage[usenames]{xcolor}
\usepackage{comment}
\usepackage{tikz}
\usepackage{ulem}
\newcommand{\sgn}{\operatorname{sgn}}

\usepackage{subcaption}
\footskip=18pt \numberwithin{equation}{section}
\newcounter{Propositionscounter}
\newcounter{Definitionscounter}

\newtheorem{theorem}{Theorem}
\newtheorem{meta-thm}[theorem]{Meta-Theorem}

\newtheorem{proposition}[Propositionscounter]{Proposition}

\newtheorem{definition}[Definitionscounter]{Definition}

\DeclareMathAlphabet{\mathcalligra}{T1}{calligra}{m}{n}

\newcommand\beq[1]{ \begin{equation}\label{#1} }
	\newcommand{\eeq}{ \end{equation} }
\newcommand{\beqno}{ \[ }
\newcommand{\eeqno}{ \] }
\newcommand\beqa[1]{ \begin{eqnarray} \label{#1}}
	
	\newcommand{\eeqa}{ \end{eqnarray} }

\newcommand{\beqano}{ \begin{eqnarray*} }
	
	\newcommand{\eeqano}{ \end{eqnarray*} }

\newcommand\equ[1]{{\rm (\ref{#1})}}

\newcommand{\norm}[1]{\left\lVert#1\right\rVert}
\newcommand{\abs}[1]{\left|#1\right|}

\def\u{\underline}

\def\A{{\mathcal A}}

\def\E{{\mathcal E}}

\def\H{{\mathcal H}}
\def\N{{\mathcal N}}
\def\K{{\mathcal K}}

\def\R{{\mathcal R}}
\def\S{{\mathcal S}}

\def\Z{{\mathcal Z}}
\def\L{{\mathcal L}}
\def\complex{{\mathbb C}}
\def\integer{{\mathbb Z}}

\def\real{{\mathbb R}}
\def\torus{{\mathbb T}}
\def\zed{{\mathbb Z}}

\def\eps{\varepsilon}

\DeclareFontFamily{U}{matha}{\hyphenchar\font45}
\DeclareFontShape{U}{matha}{m}{n}{
      <5> <6> <7> <8> <9> <10> gen * matha
      <10.95> matha10 <12> <14.4> <17.28> <20.74> <24.88> matha12
      }{}
\DeclareSymbolFont{matha}{U}{matha}{m}{n}

\DeclareMathSymbol{\varleftarrow}{3}{matha}{"D0}
\DeclareMathSymbol{\varrightarrow}{3}{matha}{"D1}

\begin{document}

\title[Effective stability estimates close to resonances]
{Effective stability estimates close to resonances with applications to rotational dynamics}
	
\author[A. Celletti]{Alessandra Celletti}
\address{Department of Mathematics, University of Roma Tor Vergata, Via della Ricerca Scientifica 1, 00133 Roma (Italy)}
\email{celletti@mat.uniroma2.it}
\author[A. Dogkas]{Anargyros Dogkas}
\address{
		Department of Mathematics, University of Pisa, Largo Bruno Pontecorvo, 5, 56127 Pisa,(Italy)}
	\email{anargyros.dogkas@phd.unipi.it}
	\author[A. Francesca Guido]{Alessia Francesca Guido$^{**}$}
    \address{Department of Physics, University of Trento, Via Sommarive
14, 38123 Povo, Trento (Italy)}
	\address{Department of Mathematics, University of Roma Tor Vergata, Via
		della Ricerca Scientifica 1, 00133 Roma (Italy)}
	\email{alessia.guido@unitn.it}
    \thanks{$^{**}$ Corresponding author Alessia Francesca Guido (alessia.guido@unitn.it)}
	
	\baselineskip=18pt              
	
\begin{abstract}

We consider nearly-integrable Hamiltonian systems defined over a non-resonant domain. In the neighborhood of resonances, we use Nekhoroshev-like estimates to provide effective stability bounds for the action variables over long time. The applicability conditions of these estimates allow some freedom in the choice of parameters. Hence, we develop an optimization algorithm for choosing parameters that maximize the stability time. To further improve the stability estimates, we use perturbation theory to reduce the norm of the perturbing function. We implement this procedure (effective stability estimates and perturbation theory) to analyze the stability of sequences of irrational (Diophantine) frequencies converging to frequencies corresponding to resonances. We consider two applications to models describing problems of rotational dynamics in Celestial Mechanics: the spin-orbit problem, described by a 1D time-dependent Hamiltonian, and the spin-spin-orbit model, described by a 2D time-dependent Hamiltonian. We show stability results for orbits close to the main resonances associated with such models.

\end{abstract}

\maketitle	
		
\noindent \bf Keywords. \rm Periodic orbits, KAM tori, Stability, Perturbation theory, Effective stability estimates.
	
\section{Introduction}\label{sec:introduction}

Invariant tori and periodic orbits are the backbone of the dynamics of nearly-integrable Hamiltonian systems. These systems can be conveniently described in action-angle coordinates as the sum of an integrable part (depending just on the actions) and a perturbation which is assumed to be small (in norm) with respect to the integrable part.

Invariant tori, formed by quasi-periodic motions, can be established through the well-known Kolmogorov-Arnold-Moser (hereafter, KAM) theory (\cite{Kolmogorov54}, \cite{Arnold63a}, \cite{Moser62}), which, in particular, provides the persistence under perturbation of an invariant torus with fixed frequency vector. Nekhoroshev's theorem (\cite{Nekhoroshev77}) gives different results than KAM theory and allows one to prove that, under certain conditions, all motions of a real-analytic, steep Hamiltonian system are stable (i.e., the action variables remain close to their initial conditions) for exponentially long time. Several works focus on improving the exponential stability exponents, which determine the order of magnitude for both the stability time and the bounds on the actions. In the general case, an optimization of the stability exponents is shown in \cite{GuChieBen2014}, where it is conjectured that these improvements are optimal. However, for the special cases, that satisfy the stronger $\ell$, $m$ quasi-convexity condition a heuristic analysis of how to improve the stability exponents can be found in \cite{Chirikov79}, while improvements using different analytical approaches are given in \cite{Lochack92}, \cite{Lochak},  \cite{Poschel93} (see also \cite{Bounemoura2011}). Such improvements are nearly optimal for the convex and quasi-convex cases, as it results from the analysis of Arnold diffusion (compare with \cite{Zhang2017}, \cite{Zhang2011}).

The steepness condition, or equivalently the quasi-convexity condition, required by Nekhoroshev's theorem, is important for bounding the evolution of an initial condition that is in the proximity of a resonance. Resonant periodic orbits occur when the frequencies, associated with the integrable part of the Hamiltonian, satisfy a commensurability relation with integer coefficients. The strict relation between invariant tori and periodic orbits is readily explained when the frequency of the invariant torus is a scalar. In this case, the periodic orbits approximating the invariant torus have frequencies given by the rational approximants to the irrational frequency of the torus (see, e.g., \cite{Greene79}).

In contrast, non-resonant stability estimates can provide a confinement of the motion without the need for a steepness condition. As a consequence, better results can be obtained in the vicinity of a resonance as long as the initial condition is sufficiently far from the commensurability. Therefore here, we take a different approach and we rather approximate periodic orbits by sequences of invariant tori, aiming to study the stability in the vicinity of resonances. In particular, for a given resonant frequency, we compute approximating irrationals that satisfy the Diophantine condition, which guarantees that we approach the resonance without intersecting any other commensurabilities. For those non-resonant frequencies, we compute effective stability estimates to provide a bound for the actions over long times, using the analytical estimates given in \cite{Poschel93}. However, such stability estimates depend on multiple parameters, that must satisfy certain applicability conditions. Since these conditions allow some freedom in the selection of the parameters, we propose an optimization algorithm to choose the parameters that maximize the stability time.

As it is common in KAM theory and Nekhoroshev’s theorem, before implementing the stability estimates, it is usually convenient to apply perturbation theory to reduce the size of the perturbation. This procedure allows us to get a system closer to an integrable one (see, for example, \cite{Celletti90I}, while we refer to \cite{CDE}, \cite{CF}, \cite{CG}, \cite{GS}, for application of Nekhoroshev’s theorem to models of Celestial Mechanics).

Effective stability estimates, together with the implementation of perturbation theory, are applied to models of rotational dynamics in Celestial Mechanics. The first model is called the spin-orbit problem (\cite{Celletti90I}, \cite{goldreich1966spin}, \cite{Lei2024}, \cite{Wisdom}, see also \cite{PSV2024} for a model including dissipation), which is represented by a 1D time-dependent Hamiltonian function and describes the motion of a satellite with triaxial shape moving on a Keplerian orbit around a central planet while rotating around its perpendicular axis to the orbital plane. The second problem is called the spin-spin-orbit problem (\cite{boue},  \cite{BCL25}, \cite{Maciejewski}, \cite{Misquero}), which is described by a 2D time-dependent Hamiltonian function and describes the motion of two ellipsoids moving around each other on Keplerian orbits and rotating about the respective spin axes, assumed to be perpendicular to the orbital plane. In both sample models, we provide stability estimates around the main resonances, namely the resonances associated with the original perturbation (while we will refer to secondary resonances as those stemming after the implementation of perturbation theory). Our results show a complex interaction between main resonance, secondary resonances, and invariant tori,  as well as the efficiency of our method and the applicability to real astronomical systems. 

This work is organized as follows. In Section~\ref{sec:Theory}, we introduce 
nearly-integrable Hamiltonian systems, resonances, and the Diophantine condition. 
We present effective stability estimates in non-resonant domains following one of the results given in \cite{Poschel93}. This result depends on several parameters, which are chosen following an optimization algorithm detailed in Appendix~\ref{AppendixAlgorithm}. We also provide a brief description of perturbation theory, which is used to normalize the Hamiltonian and reduce the magnitude of the perturbation. Finally, we discuss the choice of sequences of irrational (Diophantine) frequencies that approach specific resonances. In Section~\ref{sec:results}, we present results around the most important resonances in the spin-orbit and the spin-spin-orbit models. A series expansion of the corresponding Hamiltonians, truncated to a finite order in the eccentricity (which enters the model as a parameter), {is} given in Appendix~\ref{app:spin}. Having fixed specific resonances, we select irrational frequencies converging to these resonances and we compute effective stability estimates (with bounds on the norms of the perturbing function detailed in Appendix~\ref{app:norm}), either on the original Hamiltonian or on the Hamiltonian obtained {by} implementing perturbation theory. 
Conclusions are drawn in Section~\ref{sec:conclusion}.

\section{Optimal effective stability estimates around resonances}\label{sec:Theory}

The action variables of a nearly-integrable Hamiltonian system, defined over a non-resonant domain (Section~\ref{sec:Resonances}), can be confined for an exponential amount of time (Section~\ref{sec:effective}) under some constraints on the smallness of the norm of the perturbing function. In physical systems, the magnitude of the perturbation can be too large to satisfy these constraints. In such cases, it is convenient to use perturbation theory (Section~\ref{sec:PertTheory}) to reduce the norm of the perturbing function through a canonical near-identity transformation. 

Our goal is in fact to provide stability estimates for the action variables that maximize the stability time, encompassing also the neighborhood of some resonances. To avoid  \sl secondary \rm commensurabilities, namely resonances that arise after the {implementation} of perturbation theory, we consider sequences of non-resonant Diophantine frequencies (Section~\ref{sec:toripo}) that asymptotically converge to a pre-assigned resonance.

\subsection{Nearly-integrable Hamiltonian systems and resonances}\label{sec:Resonances}

Let us consider a nearly-integrable Hamiltonian system, written in terms of some action-angle variables $(\u{p},\u{q})\in D\subseteq \real^n\times\torus^n$, with the projection $P$ of $D$ in the action space $P D\subseteq\real^n$ an open domain:
\beq{GeneralHamiltonian}
    \H(\u{p},\u{q})=h(\u{p}) + \R(\u{p},\u{q}),
\eeq
where $h(\u{p})$ is the integrable part and $\R(\u{p},\u{q})$ is the perturbation with $\norm{\R(\u{p},\u{q})}_D$ sufficiently smaller than $\norm{h(\u{p})}_D$, for some function norm (to be specified later) $\norm{\cdot}_D$ over the domain $D$. We call $\omega(\u{p})=\nabla_{\u{p}}h(\u{p})$ the frequency  associated to the integrable part $h(\u{p})$. 

We assume that the perturbing function is a trigonometric function with a finite number of  Fourier coefficients, truncated\footnote{{For the models we are going to analyze (spin-orbit and spin-spin-orbit problems), 
the perturbing parameter $\varepsilon$ (measuring the oblateness of the body) is less than unity and the coefficients of the perturbing function decay as powers of the eccentricity, which is assumed to be less than one. As a result, {higher-order} harmonics are of significantly smaller magnitude and they are negligible for the dynamics. For this reason, the perturbing function is typically truncated to some finite order.}} to some frequency $K$:
$$
\R(\underline{p},\underline{q})=\sum_{\underline{k}\in\zed_{K}^n}R_{\underline{k}}(\underline{p})\exp(i\underline{k}\cdot\underline{q})\ ,\qquad \zed_{K}^n=\left\{\underline{k}\in\zed^n : \norm{\underline{k}}_1\leq K\right\}
$$
with $\norm{\underline{k}}_1=|k_1|+...+|k_n|$. To make explicit the set of Fourier coefficients appearing in $\R$, we write 
$$
\R(\underline{p},\underline{q})=\R_{\underline{k}\in\zed_{K}^n}(\underline{p},\underline{q})\ . 
$$
Any non-autonomous system of dimension $n-1$ (like those that will be considered in Section~\ref{sec:results}) can be transformed into an autonomous system of dimension $n$ by extending the phase space through the variable $q_n=\nu t$ with $\nu$ some constant frequency and its {conjugate} momentum $p_n$; as a consequence, one needs to add the linear term $\nu p_n$ to the integrable part. 

In the definitions below we introduce the notions of resonance, non-resonant domain and Diophantine frequency.  

\begin{definition}\label{resgeneral}
    For a fixed value of the actions $\u{p}=\u{p}_0$, the frequency vector $\u{\omega}(\u{p}_0)$ satisfies a resonance of order $\u{k}$, with $\u{k}=(k_1,k_2,\dots,k_{n})\in\zed^n\backslash\{\u{0}\}$, if
    \beq{resonance_condition}
        \u{k}\cdot\u{\omega}(\u{p}_0)=0\ .
    \eeq
\end{definition}

We notice that in the non-autonomous one-dimensional case, in which the frequency is given by $\u \omega(p_1)=(\partial_{p_1} h(p_1),\nu)$, a resonance of order  $(k_1,k_2)$ is equivalent to the existence of a periodic orbit such that 
$$
q_1(t+2\pi k_1) = q_1(t) - 2\pi\nu k_2 \ .
$$

\begin{definition}\label{nonres}
The domain $D\subseteq \real^n\times\torus^n$ is called $\alpha$, $K$ non-resonant modulo $\Lambda$, for {$\Lambda\subseteq\zed_{K}^n\backslash\{\underline{0}\}$}, if 
    $$
    \abs{\underline{k}\cdot\underline{\omega}(\underline{p})}\geq\alpha\ \quad \forall\underline{k}\in \zed_K^n\backslash{(\Lambda\cup\{\underline{0}\})}\ , \quad (\underline{p},\underline{q})\in D
    $$
    for some $\alpha\in\real_+$.
    Furthermore, if $\Lambda=\emptyset$, then $D$ is said to be a completely $\alpha$, $K$ non-resonant domain. 
\end{definition}

The Diophantine condition introduced below gives a stronger non-resonant condition on the frequency. 

\begin{definition}\label{diophcond}
A frequency vector $\u{\omega}(\u{p})$ is said to satisfy the Diophantine condition if 
$$
\abs{\underline{k}\cdot\underline{\omega}(\underline{p})}\geq\frac{C}{\norm{k}^\tau_1}\quad \forall\ \u{k}\in\zed^{n}\backslash\{\u{0}\}
$$
for some constants $C\in\real_+$ and $\tau\geq n-1$.
\end{definition}

The main difference between the non-resonant definitions \ref{nonres} and \ref{diophcond} is that, while the $\alpha$, $K$ non-resonant condition requires that the frequency is away from the commensurabilities up to some given frequency $K$, the Diophantine condition demands a stronger condition for all possible commensurabilities of any order, ensuring that the frequency $\u{\omega}(\u{p})$ stays sufficiently far from resonances.

\subsection{Effective stability estimates}\label{sec:effective}
Under some conditions on the norm of the perturbing function of the system \equ{GeneralHamiltonian}, Nekhoroshev's theorem provides a confinement of the actions for exponentially {long time}. Following \cite{Poschel93}, we present in Section~\ref{sec:nonres} a version of Nekhoroshev's theorem for an $\alpha$, $K$ non-resonant domain. The statement of the theorem, while demanding some applicability conditions, leaves some freedom in the choice of the parameters. As a consequence, when making explicit estimates, a crucial point will be the optimization of such parameters, an issue that is analyzed in Section~\ref{sec:optimization} where an optimization procedure is proposed.

\subsubsection{Non-resonant effective stability estimates}\label{sec:nonres}
Consider the Hamiltonian \equ{GeneralHamiltonian}, 
which we assume to be real analytic in $D$, with $PD\subseteq\real^n$ open.
Let us consider a complex extension $V_{r_0}PD\times W_{s_0}\torus^n$, for $r_0, s_0\in\real_+$, where
\beqano 
V_{r_0}PD&=&\{\underline{p}\in\complex^n:\ dist(\underline{p},PD)<r_0 \}\nonumber\\
W_{s_0}\torus^n&=&\{\underline{q}\in\complex^n:\ Re(q_j)\in\torus,\ \ 
|Im(q_j)|<s_0,\ \ j=1,...,n \}\ . 
\eeqano 
For a function $f$ analytic in $V_{r_0}PD\times W_{s_0}\torus^n$, we introduce the norm 
$$
\|f\|_{r_0,s_0}=\sup_{\underline{p}\in V_{r_0}PD} \ \sum_{\underline{k}\in\integer^n}
|f_{\underline{k}}(\underline{p})|\ \exp\left(\norm{\underline{k}}_1s_0\right).
$$
If $E$ and $M$ are upper bounds on the norms, respectively, of the perturbation $\R(\u{p},\u{q})$ and the Hessian of $h(\u{p})$: 
\beq{bounds}
\norm{\R(\u{p},\u{q})}_{r_0,s_0}\leq E\quad ,\quad \sup_{\underline{p}\in V_{r_0}PD} \norm{\partial_{\underline{p}}^2 h(\underline{p})}_2\leq M, 
\eeq
where $\|\cdot\|_2$ is the Euclidean norm for the Hessian matrix, 
then the following statement holds (we refer the reader to \cite{Poschel93} for the proof). 

\begin{theorem}\label{thm:nonres}
Consider a Hamiltonian of the form \equ{GeneralHamiltonian} defined on a completely $\alpha$, $K$-nonresonant domain $D$. For some positive parameters $j$ and $\ell$, such that $1/j+1/\ell=1$, and $r$ such that
$$
r\leq \min({\alpha\over {j\,M\,K}},r_0)\ ,
$$
let $E$ be bounded by 
$$
E\leq{1\over {2^7\ell}}\ {{\alpha\, r}\over {K}}\ .
$$
Then, for any initial condition $(\underline{p}_0,\underline{q}_0)\in D$, the evolution of the system in the action space is bounded as  
$$
\|\underline{p}(t)-\underline{p}_0\|\leq r\qquad {\rm for\ all}\ |t|\leq T\equiv {{s_0r}\over {5E}}\ \exp\left(\frac{K s_0}{6}\right)\ .
$$
\end{theorem}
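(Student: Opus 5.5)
The plan is to follow Pöschel's scheme: a nonresonant normal form built by Lie series, followed by an energy-type confinement argument. Since $D$ is completely $\alpha$, $K$ nonresonant, the resonant module is trivial. The target is a symplectic near-identity map $\Phi$ on a slightly smaller complex domain $V_{r}PD\times W_{s}\torus^n$ that conjugates $\H$ to
$$
\H\circ\Phi=h+g+f,
$$
where $g=g(\u{p})$ depends only on the actions. We aim to make the angle-dependent remainder exponentially small, $\|f\|\lesssim E\,e^{-Ks_0/6}$, and the deformation small, $\|\Phi-{\rm id}\|_{\u p}\lesssim r/\cdot$ (a fixed fraction of $r$).

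The first step is the homological equation. Because $\R$ has Fourier modes only in $\zed_K^n$, each step solves $\{h,\chi\}=\R-\langle\R\rangle$ mode by mode, with $\chi_{\u k}=R_{\u k}/(i\,\u k\cdot\omega(\u p))$. On the real domain the divisors satisfy $|\u k\cdot\omega|\ge\alpha$. On the complex extension $V_r PD$ we need a lower bound of the form
$$
|\u k\cdot\omega(\u p')|\ge\alpha-|\u k|_1 M r\ge\alpha(1-1/j)=\alpha/\ell .
$$
This uses the Hessian bound $M$ from \equ{bounds} and the hypothesis $r\le\alpha/(jMK)$, and it is exactly where the relation $1/j+1/\ell=1$ enters. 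Hence $\|\chi\|\le \ell E/\alpha$ with no loss in $s$, since no small-divisor accumulation occurs for finitely many modes.

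Next comes the iteration. We run roughly $N\approx Ks_0/6$ Lie-series steps. At each step the width in $q$ shrinks by about $s_0/(6N)$ and the width in $p$ by about $r/N$. The standard Cauchy estimates give $\|\{\chi,\cdot\}\|\le C\,\|\chi\|/(\delta_r\delta_s)$, so each step contracts the nonnormal part by a factor like $2^{c}\ell E K/(\alpha r)$ (using $\delta_s\sim 1/K$). The smallness hypothesis $E\le \alpha r/(2^7\ell K)$ is designed to make this factor at most $e^{-1}$. A cleaner alternative, which I would try first, is to treat high Fourier modes $|k|>K$ (generated by the Poisson brackets) as part of the remainder using the exponential weight in the norm. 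These modes are bounded by $e^{-Ks_0/6}$ directly, so the iteration only needs to handle modes in $\zed_K^n$.

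Finally, the confinement step. In normal-form coordinates, $\dot{\u p}'=-\partial_{\u q}f$, so $|\u p'(t)-\u p'(0)|\le |t|\,\|f\|/s$ by a Cauchy estimate. For $|t|\le T=\frac{s_0r}{5E}e^{Ks_0/6}$ this drift is a fraction of $r$. Adding the two coordinate deformations, each at most a fraction of $r$, gives $\|\u p(t)-\u p_0\|\le r$. Since the nonresonance is complete, no steepness or quasi-convexity argument is needed; we must only check that the orbit stays in the domain where the normal form is valid, via a standard continuity (bootstrap) argument. The main obstacle is tracking the explicit constants, namely $2^7$, $1/5$ and $1/6$, through the Cauchy estimates and the domain reductions so that the three fractions of $r$ sum to at most $r$.
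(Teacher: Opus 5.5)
Your architecture matches the proof the paper relies on: the paper gives no argument of its own and refers to P\"oschel. You build a completely nonresonant normal form from finitely many Lie steps, with divisors bounded below by $\alpha/\ell$ on $V_rPD$. Your bound $\alpha-\|\u k\|_1Mr\ge\alpha/\ell$ is exactly where $1/j+1/\ell=1$ enters. You then use the drift estimate $\dot{\u p}'=-\partial_{\u q}f$ plus two small coordinate deformations.

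\textbf{The gap is in the iteration.} With action losses $\delta_r\approx r/N$ and angular losses $\delta_s\approx 1/K$, the Cauchy bound you quote gives a per-step factor of order $C\ell EKN/(\alpha r)$, not $2^c\ell EK/(\alpha r)$: you dropped the factor $N$. You need $N\approx Ks_0/6$, but the hypothesis $E\le\alpha r/(2^7\ell K)$ contains no $s_0$. So the factor is of order $CN/2^7$, and it exceeds $e^{-1}$ once $Ks_0$ is moderately large, which is exactly the regime where $T$ matters.

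The same $1/\delta_r$ reappears in the brackets $\{g,\chi_j\}$ with the accumulated average $g=g(\u p)$, which you never estimate. These brackets are linear in the current nonnormal size, with coefficient $K\ell\|\partial_{\u p}g\|/\alpha$. With uniform losses, $\|\partial_{\u p}g\|$ is only bounded by about $E/\delta_r\sim NE/r$. So the scheme as written does not give $\|f\|\lesssim Ee^{-Ks_0/6}$ under the stated hypothesis. Your ``cleaner alternative'' does not help either: brackets of modes in $\zed^n_K$ keep regenerating low modes, so a contracting iteration is still needed.

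\textbf{The missing idea:} the action losses must be non-uniform and summable, tied to the decay of the perturbation. Let $\epsilon_j\le e^{-j}E$ be the current nonnormal size. The map generated by $\chi_j$ moves the actions only by at most $K\|\chi_j\|\le K\ell\epsilon_j/\alpha$, so you can take, for example, $\rho_j\propto e^{-j/2}r$. Then:
\begin{itemize}
\item every bracket factor is $\lesssim K\ell\epsilon_j/(\alpha\rho_j)\lesssim K\ell E/(\alpha r)$, which is uniformly small under the hypothesis;
\item $\|\partial_{\u p}g\|\lesssim\sum_j\epsilon_j/\rho_j$ is bounded independently of $N$;
\item $\sum_j\rho_j$ is a fixed fraction of $r$.
\end{itemize}
This is why the smallness condition can take the $s_0$-free form $E\lesssim\alpha r/(\ell K)$. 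Only the angular loss is uniform, a fixed multiple of $1/K$ per step. It is needed so that the tail of modes with $\|\u k\|_1>K$ shrinks by a fixed factor $e^{-K\sigma}$ at each step. This angular budget is what fixes $N\approx Ks_0/6$ and produces the factor $e^{-Ks_0/6}$. The explicit constants you name as the main obstacle are secondary to this point.
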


\subsubsection{Optimization of the parameters}\label{sec:optimization}

The application of Theorem~\ref{thm:nonres} requires a choice of the parameters $(r_0, s_0, \alpha, K, M, E, \ell)$ that, while necessary to satisfy the applicability conditions of Theorem~\ref{thm:nonres}, have some freedom. This means that for a given initial condition $(\u{p}_0,\u{q}_0)$, if there exists a choice of these parameters that satisfies the conditions of Theorem~\ref{thm:nonres}, then there exists also an optimal choice of the parameters that maximizes the stability time, though confining the actions to a (possibly) small neighborhood of the initial data. 

More specifically, if $\K\subseteq\zed^n_K/\left\{\u{0}\right\}$ is the set of Fourier indices appearing in the perturbing function $\R(\u{p},\u{q})$, one needs to choose the parameters 
\beq{conditions}
\left\{\begin{array}{l}
     \displaystyle E_{min}\equiv\norm{\R(\u{p}_0,\u{q}_0)}_{r_0,s_0}\leq E\leq \E \equiv{\frac{1}{2^7\ell}}\ \frac{\alpha\ r}{K}  \\
     ~\\
     \displaystyle\alpha\leq \alpha_{max}\equiv \inf_{\u{k}\in \mathbb{Z}^{n}_{K}}|\u{k}\cdot\u{\omega}(\u{p}_0)|\\
     ~\\
     \displaystyle M\geq M_{min}\equiv\sup_{\norm{\u{p}-\u{p}_0}\leq r_{0}}\left(\norm{\partial^2_{\u{p}}h(\u{p})}_2\right)\\
     ~\\
     \displaystyle r\leq r_{max}\equiv\min\left(\frac{\alpha}{MK}\left(1-\frac{1}{\ell}\right),r_0\right)\\
     ~\\
     K\geq \tilde{K} \quad\textrm{with } \quad\tilde{K}\equiv\sup_{\u{k}\in\K}\left(\norm{\u{k}}_1\right)
\end{array}\right. \begin{array}{l}
     \displaystyle\textrm{such that}\hspace{0.25cm}T\equiv {{s_0r}\over {5E}}\ \exp\left(\frac{K s_0}{6}\right)  \\
     \textrm{is maximized.}\\
\end{array}
\eeq
The optimal choice will always be such that $E=E_{min}$ and $r=r_{max}$, which implies the choice $M=M_{min}$ and  $\alpha=\alpha_{max}$. As a result, the choice of arbitrary parameters has been reduced to $(r_0,s_0,K,\ell)$. 

In most cases of physical interest, the norm of the perturbation $\norm{\R(\u{p}_0,\u{q}_0)}_{r_0,s_0}$ is the main limiting factor, since the perturbing function is not always of small magnitude. For this reason, we can simplify the optimization procedure by choosing the parameter $\ell$ such that it maximizes $\E$. Doing this, we are able to relax the condition on the norm, which allows us to apply Theorem~\ref{thm:nonres} more consistently and with greater freedom in the choice of the remaining parameters. In other words, while maximizing $\E$ is reducing the time $T$, it allows us to choose an optimal value for $s_0$, which increases $T$ exponentially. In the models we are going to analyze in Section~\ref{sec:results}, this is almost always the best choice and significantly reduces the number of computations that are needed for the optimization.

Considering the initial conditions $(\u{p}_0,\u{q}_0)$ and setting the parameter $r_0$ as input to initialize the maximization process, we are led to optimize the set of variables $(s_0,K)$. To this end, we implement an optimization process, where we compute the results for each value of $K$, starting from an initial guess and increasing $K$ as far as the results are improving, storing previous values of $K$ to be used whenever the code starts to diverge. Finally, for every choice of $K$, we compute $s_0$ so that it maximizes the value of $T$. More details on the optimization algorithm are given in Appendix~\ref{AppendixAlgorithm}.

\subsection{Perturbation theory}\label{sec:PertTheory}\label{sec:PM}
Although the upper bound, required by Theorem~\ref{thm:nonres}, on the norm of the perturbing function has been relaxed through a suitable choice of the parameter $\ell$ (See Section~\ref{sec:optimization}), it might {still not be enough} for the computation of stability estimates in physical systems. Instead, it is convenient to reduce the norm of the perturbing function by implementing perturbation theory, which is based on the construction of near-identity canonical transformations{,} as we are going to describe below. 

We consider the Hamiltonian \equ{GeneralHamiltonian}, defined on an $\alpha$, $K$ non-resonant modulo $\Lambda$ domain $D$. We split the perturbation into a non-resonant part $\R_{\underline{k}\in\zed_{K}^n/\Lambda}(\u{p},\u{q})$ and a resonant part $\R_{\underline{k}\in\Lambda}(\u{p},\u{q})$. Additionally, we separate the integrable Hamiltonian $h$ as $h(\u{p} )=\N(\u{p})+\Z(\u{p})$, namely the sum of a normal form $\N(\u{p})$ and an integrable correction $\Z(\u{p})$ with $\norm{\Z+\R_{\underline{k}\in\zed_{K}^n/\Lambda}}_{D}$ being sufficiently smaller than $\norm{\N}_{D}$.

Let $\L_\chi$ be the Lie operator, $\L_\chi=\{\cdot,\chi\}$ with $\{\cdot,\cdot\}$ denoting the Poisson brackets, and let $\exp(\L_\chi)$ be the Lie series operator
$$
\exp(\L_\chi)=\sum_{j=0}^\infty{1\over {j!}} \L_\chi^j\ .
$$
We consider a Lie canonical transformation of variables defined through a generating function $\chi=\chi(\underline{p}^{(1)},\underline{q}^{(1)})$:
\beq{coord}
(\underline{p},\underline{q})\mapsto(\underline{p}^{(1)},\underline{q}^{(1)})=(\exp(\L_{\chi(\underline{p}^{(1)},\underline{q}^{(1)})})\underline{p},\exp(\L_{\chi(\underline{p}^{(1)},\underline{q}^{(1)})})\underline{q})
\eeq
with back-transformation
\beq{backcoord}
(\underline{p}^{(1)},\underline{q}^{(1)})\mapsto(\underline{p},\underline{q})=(\exp(-\L_{\chi(\underline{p}^{(1)},\underline{q}^{(1)})})\underline{p}^{(1)},\exp(-\L_{\chi(\underline{p}^{(1)},\underline{q}^{(1)})})\underline{q}^{(1)})\ .
\eeq
The transformed Hamiltonian can be written as (see \cite{Laplata}, \cite{Ferraz})
$$
\mathcal{H}^{(1)}(\underline{p}^{(1)},\underline{q}^{(1)})=\exp(\L_{\chi(\underline{p}^{(1)},\underline{q}^{(1)})})\mathcal{H}(\underline{p}^{(1)},\underline{q}^{(1)})\ .
$$
As it is well known, the generating function is determined by solving the homological equation 
\beq{NFeq}
\L_{\chi(\underline{p}^{(1)},\underline{q}^{(1)})}\N(\underline{p}^{(1)})+\R_{\underline{k}\in\zed_{K}^n/\Lambda}(\underline{p}^{(1)},\underline{q}^{(1)})=0\ , 
\eeq
whose solution can be written as 
\beq{chi}
\chi(\underline{p}^{(1)},\underline{q}^{(1)})=\sum_{\underline{k}\in \zed_K^n\backslash\{\Lambda\cup\underline{0}\}} 
{{R_{\underline{k}}(\underline{p}^{(1)})}\over {i\underline{k}\cdot\underline{\omega}(\underline{p}^{(1)})}}\ e^{i \underline{k}\cdot\underline{q}^{(1)}}\ ,
\eeq
which is well defined for all $(\u{p}^{(1)},\u{q}^{(1)})\in D^{(1)}$ with $D^{(1)}$ an $\alpha$, $K$ non-resonant domain modulo $\Lambda$.
Then, the new Hamiltonian is given by
\beqano 
\H^{(1)}(\underline{p}^{(1)},\underline{q}^{(1)})
&=&\sum_{j=0}^\infty {1\over {j!}} \left(\L_{\chi(\underline{p}^{(1)},\underline{q}^{(1)})}^j \N(\underline{p}^{(1)})+\L_{\chi(\underline{p}^{(1)},\underline{q}^{(1)})}^j \Z(\underline{p}^{(1)})+\L_{\chi(\underline{p}^{(1)},\underline{q}^{(1)})}^j \R(\underline{p}^{(1)},\underline{q}^{(1)})\right)\ .
\eeqano 
Using \equ{NFeq}, we obtain 
\beqano 
\H^{(1)}(\underline{p}^{(1)},\underline{q}^{(1)})&=&\N(\underline{p}^{(1)})+\Z(\underline{p}^{(1)})+\R_{\underline{k}\in\Lambda}(\underline{p}^{(1)},\underline{q}^{(1)})+\R'(\underline{p}^{(1)},\underline{q}^{(1)})
\eeqano
with 
\beqano \R'(\underline{p}^{(1)},\underline{q}^{(1)})&=&\sum_{j=1}^\infty \Bigg({j\over {(j+1)!}} \L_{\chi(\underline{p}^{(1)},\underline{q}^{(1)})}^j \R_{\underline{k}\in\zed_{K}^n/\Lambda}(\underline{p}^{(1)},\underline{q}^{(1)})\nonumber\\&&\hspace{1.5cm}+{1\over {j!}} \left(\L_{\chi(\underline{p}^{(1)},\underline{q}^{(1)})}^j \Z(\underline{p}^{(1)})+\L_{\chi(\underline{p}^{(1)},\underline{q}^{(1)})}^j\R_{\underline{k}\in\Lambda}(\underline{p}^{(1)},\underline{q}^{(1)})\right)\Bigg)\ .
\eeqano 
We notice that, in the above expression, first-order perturbation terms have disappeared (see Appendix \ref{app:perturbstep} for more details on the computation of $\R'$). Moreover, the remainder is now of second order; in fact, recalling \equ{chi}, its norm is $\norm{\left\{\R_{\underline{k}\in\zed_{K}^n/\Lambda}(\underline{p}^{(1)},\underline{q}^{(1)}),\chi(\underline{p}^{(1)},\underline{q}^{(1)})\right\}}_{D^{(1)}}$\\${\sim} \norm{\R_{\underline{k}\in\zed_{K}^n/\Lambda}(\underline{p}^{(1)},\underline{q}^{(1)})}^2_{D^{(1)}}$.

Finally, the higher-order corrections of the integrable part are given by 
$$
\Z^{(1)}(\underline{p}^{(1)})=\Z(\underline{p}^{(1)})+\left<\R'(\underline{p}^{(1)},\underline{q}^{(1)})\right>_{\underline{k}\in\zed_{K}^n/\Lambda},
$$
where $\left<\cdot\right>_{\underline{k}\in\zed_{K}^n/\Lambda}$ denotes the average over the non-resonant angles, and the remainder function (in the new coordinates) takes the form
$$
\R^{(1)}(\underline{p}^{(1)},\underline{q}^{(1)})=\R_{\underline{k}\in\Lambda}(\underline{p}^{(1)},\underline{q}^{(1)})+\R'(\underline{p}^{(1)},\underline{q}^{(1)})-\left<\R'(\underline{p}^{(1)},\underline{q}^{(1)})\right>_{\underline{k}\in\zed_{K}^n/\Lambda}\ . 
$$
Of course, this procedure can be iterated to further reduce the norm of the perturbing function, up to some optimal order, before the accumulation of small divisors starts increasing the norm of the remainder (see \cite{Laplata}, \cite{Efthymiopoulos_Giorgilli_Contopoulos_2004}).

After applying Theorem~\ref{thm:nonres} to the transformed Hamiltonian $\H^{(J)}(\u{p}^{(J)},\u{q}^{(J)})$ at step $J$, we back-transform the initial conditions and we confine the actions in terms of the initial coordinates through \equ{backcoord}. 

We will refer to the resonances that correspond to the Fourier indices associated with $\R(\underline{p},\underline{q})$, as \sl main \rm resonances. \sl Secondary \rm resonances of $J^{\textrm{th}}$ order correspond to the Fourier indices associated with $\R^{(J)}(\underline{p}^{(J)},\underline{q}^{(J)})$, which represents the perturbing function after $J$ perturbative steps.

\subsection{Sequences of invariant tori}\label{sec:toripo}
While the implementation of some perturbative steps reduces the size of the perturbing function, it also adds to the new Hamiltonian a significant number of terms, represented by higher-order corrections. These additional terms introduce a large number of new harmonics into the Hamiltonian. As a consequence, studying the stability times in the vicinity of resonances becomes challenging, as these new harmonics are now interfering with the computations.

In order to reduce this interference as much as possible, an appropriate choice of the initial conditions, around the resonance under study, can be used. More specifically, we construct sequences of irrational Diophantine frequencies converging to the frequencies defining the resonances. Since the procedures to construct such sequences are different for the 1D and 2D cases (like those studied in Section~\ref{sec:results}), we treat each case in a separate subsection. 

\subsubsection{Diophantine frequencies for a 1D time-dependent Hamiltonian}\label{sec:1D}
In the case of a non-autonomous 1D system, we have ${\u\omega}(\u{p})=(\omega_1(p_1),\nu)=(\partial_{p_1}h(p_{1}),\nu)$. In normalized units, we can set $\nu=1$. Therefore, in the proximity of a resonance of order  $(k_{1},k_{2})$, we seek sequences of irrational frequencies that approximate the resonant frequency $\omega_1=-k_2/k_1$. In other words, we look for invariant tori converging to the periodic orbit.

Such irrational sequences, approximating the periodic orbit from above and below, can be defined as 
\beq{SOirr}
\Gamma_{z,s}^{(k_1,k_2)}=-{k_2\over k_1}-{s\over {z+\gamma}}\ ,\qquad 
\Delta_{z,s}^{(k_1,k_2)}=-{k_2\over k_1}+{s\over {z+\gamma}}
\eeq
for $z\in\integer\backslash\{0\}$, $s\in\real_+$, $\gamma$ being the golden ratio, $\gamma=(\sqrt{5}-1)/ 2$. It is clear that the sequences $\Gamma_{z,s}^{(k_1,k_2)}$, $\Delta_{z,s}^{(m,n)}$ converge to $-k_2/k_1$ as $z$ increases.
 Besides, they satisfy the Diophantine condition (Definition~\ref{diophcond}) for rational values of $s$ as shown by the following result. 

\begin{proposition}
Let $s=d/w$ with $d,w\in\integer\backslash\{\u{0}\}$, then the numbers  ${\omega_1=}\Gamma_{z,s}^{(k_1,k_2)}$ {or} ${\omega_1=}\Delta_{z,s}^{(k_1,k_2)}$ defined in \equ{SOirr} satisfy the inequality
\beq{weakdioph}
|k_1\omega_1+k_2|\geq\frac{C}{|k_1|},\quad \forall (k_1,k_2)\in\integer^2\backslash\{\u{0}\}\ .
\eeq
\end{proposition}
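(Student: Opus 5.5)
The plan is to reduce the statement to the fact that the golden ratio $\gamma$ is badly approximable, and then to show that this property survives the rational Möbius transformation $x\mapsto -k_2^0/k_1^0 \mp s/(z+x)$. Here $(k_1^0,k_2^0)$ denotes the fixed resonance in \equ{SOirr}, to distinguish it from the generic pair $(k_1,k_2)$ in \equ{weakdioph}. The key classical input is that $\gamma$ has continued fraction $[0;1,1,1,\dots]$. Equivalently, for every $(p,q)\in\integer^2$ with $q\neq 0$ one has $|q\gamma-p|\geq c_0/|q|$ with an explicit $c_0>0$. One can take $c_0=1/3$, since $q^2\gamma^2+q^2\gamma-... $ is handled more cleanly as follows: $|(q\gamma-p)(q\bar\gamma-p)|=|p^2+pq-q^2|\geq 1$, where $\bar\gamma=-(\sqrt5+1)/2$ is the conjugate root of $x^2+x-1$. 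I will use this norm argument directly, because it adapts to Möbius images.

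First I write $\omega_1=\Gamma^{(k_1^0,k_2^0)}_{z,s}$ (the case of $\Delta$ is identical after changing the sign of $s$) as $\omega_1=\frac{A\gamma+B}{C'\gamma+D}$ with integer entries, after clearing the denominators $k_1^0$ and $w$. Concretely,
\[
\omega_1=\frac{-k_2^0 w(z+\gamma)-k_1^0 d}{k_1^0 w(z+\gamma)},
\]
so $A=-k_2^0w$, $B=-k_2^0wz-k_1^0d$, $C'=k_1^0w$ and $D=k_1^0wz$. The determinant is $AD-BC'=(k_1^0)^2wd\neq 0$, so $\omega_1$ is irrational, and it lies in $\mathbb{Q}(\sqrt5)$ with conjugate $\bar\omega_1$ obtained by replacing $\gamma$ with $\bar\gamma$.

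Next, for any $(k_1,k_2)\ne \u 0$, I multiply $k_1\omega_1+k_2$ by its Galois conjugate $k_1\bar\omega_1+k_2$. After multiplying by the denominators $(C'\gamma+D)(C'\bar\gamma+D)=C'^2(z^2-z-1)\cdot$(sign adjustments), which is a nonzero integer $N_0$ independent of $(k_1,k_2)$, the product becomes the norm of a nonzero element of $\integer[\gamma]$. Hence
\[
|k_1\omega_1+k_2|\,|k_1\bar\omega_1+k_2|\geq \frac{1}{|N_0|}.
\]
The element is nonzero because $\omega_1$ is irrational and $(k_1,k_2)\neq\u 0$. Note that $z^2-z-1\neq0$ for integer $z$. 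It then remains to bound the conjugate factor: $|k_1\bar\omega_1+k_2|\leq |k_1||\bar\omega_1-\omega_1|+|k_1\omega_1+k_2|$.

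Finally I split into two cases. If $|k_1\omega_1+k_2|\geq 1$, the inequality holds trivially with any $C\leq 1$, because $|k_1|\geq1$ when $k_1\neq0$. When $k_1=0$, the left side is $|k_2|\geq1$ and the right side should be read as $+\infty$ formally; I will state that \equ{weakdioph} is intended for $k_1\neq0$, or equivalently that it is trivial there. Otherwise the conjugate factor is at most $|k_1|(|\bar\omega_1-\omega_1|+1)$, which gives $|k_1\omega_1+k_2|\geq C/|k_1|$ with $C=\bigl(|N_0|(1+|\omega_1-\bar\omega_1|)\bigr)^{-1}$. The main obstacle is the bookkeeping of the conjugation step: checking that the cleared product is indeed an integer (it is, since it is a norm from $\integer[\gamma]$ multiplied by integer denominators) and keeping track of how $C$ depends on $z,s,k_1^0,k_2^0$. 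The qualitative argument is standard: quadratic irrationals are badly approximable, and $\Gamma$, $\Delta$ are quadratic irrationals in $\mathbb{Q}(\sqrt5)$. So, if the bookkeeping becomes messy, I would fall back on Lagrange's theorem, which says that the continued fraction of $\omega_1$ is eventually periodic and therefore has bounded partial quotients.
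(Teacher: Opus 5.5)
Your argument is correct and is essentially the paper's. The paper exhibits an integer quadratic satisfied by $\Gamma_{z,s}^{(k_1,k_2)}=(I_1+I_2\sqrt5)/I_3$ and cites Liouville's theorem, while you inline the degree-two Liouville argument through the nonzero integer norm $N_0(k_1\omega_1+k_2)(k_1\bar\omega_1+k_2)$. This also checks irrationality explicitly via $AD-BC'=(k_1^0)^2wd\neq0$, and gives the explicit constant $C=\bigl(|N_0|(1+|\omega_1-\bar\omega_1|)\bigr)^{-1}$ with $N_0=(k_1^0w)^2(z^2-z-1)$.

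The one slip is in your $k_1=0$ remark. Reading $C/|k_1|$ as $+\infty$ would make the inequality false rather than trivial, so the correct convention is to restrict to $k_1\neq0$. Alternatively, put $\norm{\underline{k}}_1$ in the denominator as in Definition~\ref{diophcond}; that version follows from your bound after adjusting $C$.
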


Before giving the proof, we remark that the Diophantine condition in Definition~\ref{diophcond} is different from \equ{weakdioph} to which we refer as the \sl strong \rm Diophantine condition. 

\begin{proof}
We consider only $\Gamma_{z,s}^{(k_1,k_2)}$, since the proof for $\Delta_{z,s}^{(k_1,k_2)}$ is similar. We can write $\Gamma_{z,s}^{(k_1,k_2)}$ as 
$$
\Gamma_{z,s}^{(k_1,k_2)}={{I_1+I_2\sqrt{5}}\over I_3}
$$
with $I_1$, $I_2$, $I_3$ integers given by 
\beqano 
I_1&=&2 \left(z^2-z-1\right) (-k_2) w+(1-2 z) k_1 d\nonumber\\
I_2&=&k_1 d\nonumber\\
I_3&=&2 \left(z^2-z-1\right) k_1 w\ . 
\eeqano 
Then, $\Gamma_{z,s}^{(k_1,k_2)}$ is a solution of the equation 
$$
I_3(x-\Gamma_{z,s}^{(k_1,k_2)})(I_3 x-I_1+I_2 \sqrt{5})=0\ , 
$$
that we can write as 
$$
A_2 x^2+A_1 x+A_0=0\ . 
$$
Hence, $\Gamma_{z,s}^{(k_1,k_2)}$ is the solution of a second-order polynomial equation with integer coefficients, since 
\beqano 
A_2&=&4 \left(-z^2+z+1\right)^2 k_1^2 w^2\nonumber\\
A_1&=&-4 \left(z^2-z-1\right) k_1\, w \left(2 \left(z^2-z-1\right) (-k_2)\, w+k_1 (d-2 z \,d)\right)\nonumber\\
A_0&=&4 \left(z^2-z-1\right) \left(\left(z^2-z-1\right) (-k_2)^2 w^2+(1-2 z) (-k_2)\, k_1\, d\, w+k_1^2 d^2\right)\ . 
\eeqano 
By a theorem by Liouville (\cite{Khinchin}), any irrational algebraic number of degree\footnote{Namely, solution of an irreducible polynomial equation of order 2 with integer coefficients.} two satisfies the Diophantine condition. 
\end{proof}

We remark that, besides the Diophantine condition, KAM theory requires that the unperturbed Hamiltonian satisfies Kolmogorov's non-degeneracy condition, which for a 1D time-dependent Hamiltonian, amounts to asking that the second derivative with respect to the action of the integrable part is different from zero; this condition will be satisfied by the spin-orbit problem that we will consider in Section~\ref{sec:SO}. Given that the phase space associated with a non-autonomous one-dimensional Hamiltonian has dimension 3, the 2D KAM tori separate the phase space into invariant regions. As a consequence, the existence of tori with frequencies $\Gamma_{z,s}^{(k_1,k_2)}$ and  $\Delta_{z,s}^{(k_1,k_2)}$ as in \equ{SOirr} guarantees the confinement of the trajectories, in between these tori, for an infinite time. 

Instead, in the case of the 2D non-autonomous system associated {with} the spin-spin-orbit problem in Section~\ref{sec:SS} the associated phase space has dimension 5, and therefore, the invariant tori can no longer separate the phase space, thus rendering stability estimates more effective.

\subsubsection{Diophantine frequencies for a 2D time-dependent Hamiltonian}\label{sec:2D}
Let us consider a non-autonomous 2D Hamiltonian. Similarly to the 1D case, we construct sequences of irrational frequencies that converge to a given resonance, or rather to an intersection of resonances, which are defined as follows. 

An intersection of two lower-dimensional resonances of order $(k_1,k_{3,1})$, $(k_2,k_{3,2})$ for $k_1,k_2,k_{3,1},k_{3,2}\in\integer\backslash\{0\}$ is defined by the set of equations
\beq{res1Dbis}
k_1\omega_1(\u p_0)+k_{3,1}\ =\ 0\ ,\qquad k_2\omega_2(\u p_0)+k_{3,2}=0\ .
\eeq
Setting $\u{\omega}(\u p)=(\omega_1(\u p),\omega_2(\u p),1)$, the condition $\u{\omega}(\u p)\cdot \u k=0$ with $\u k=(k_1,k_2,k_3)$ and $k_3=k_{3,1}+k_{3,2}$, represents an intersection of the resonances of order  $(k_1,0,k_{3,1})$ and $(0,k_2,k_{3,2})$. 

\vskip.1in 

Following \cite{CellettiFL04}, in the general $n$-dimensional case, we define the vector $\u{\omega}=(\omega_{1},\ldots,\omega_{n-1},{1}) \in \mathbb{R}^{n}$ such that:
\beq{diophevec}
\left(\begin{array}{c}
    \omega_{1}\\
    \vdots \\
    \omega_{n-1}\\
    {1}\end{array}\right)=\left(\begin{array}{cccc}
    b_{1} \\
    \vdots &  & \A & \\
    b_{n-1} &  &  & \\
    {1} & {0} & {\cdots} & {0}\end{array}\right)\left(\begin{array}{c}
    1\\
    \alpha\\
    \vdots\\
    \alpha^{n-1}\end{array}\right)\ , 
\eeq
where the vector $(b_{1},\ldots,b_{n-1})\in\real^{n-1}$, the $(n-1)\times (n-1)$ dimensional matrix $\A$ has rational coefficients $a_j$ with $\det \A\neq 0$, and $\alpha$ is a real algebraic number of degree $n$. The vectors defined by \equ{diophevec} satisfy the Diophantine condition (we refer to \cite{CellettiFL04} for the proof).

For $n=3$, which encompasses the case of a 2D non-autonomous system, we choose $\alpha$ as the solution of the following real algebraic equation of degree 3: 
\beq{algnumb}
\alpha^3+\alpha^2-1=0\ . 
\eeq
The number $s=1/\alpha$ is the smallest Pisot-Vijayaraghavan (PV) number of degree 3 {(\cite{Cassels})}, whose value is about equal to $s=1.324716$.

For an intersection of resonances of {order} $(k_{1},0,k_{3,1})$, $(0,k_{2},k_{3,2})$, according to \equ{res1Dbis} we introduce the sequence of Diophantine vectors:
\beq{freqvecdiophss}
\underline{\omega}^{(k_{1},k_{3,1}),(k_{2},k_{3,2})}_{z}=(\omega_{1,z}^{(k_{1},k_{3,1})},\omega_{2,z}^{(k_{2},k_{3,2})},1)\ , \qquad k\in \mathbb{Z} \setminus \{0\}
\eeq
with 
\begin{align}
\omega_{1,z}^{(k_{1},k_{3,1})} &= -\frac{k_{3,1}}{k_{1}} \pm \left( \frac{\tilde{a}_{1}}{z}\alpha + \frac{\tilde{a}_{2}}{z}\alpha^{2} \right) \notag \\
\omega_{2,z}^{(k_{2},k_{3,2})} &= -\frac{k_{3,2}}{k_{2}} \pm \left( \frac{\tilde{a}_{3}}{z}\alpha + \frac{\tilde{a}_{4}}{z}\alpha^{2} \right)\ , 
\label{diophvectors}
\end{align}
where $\alpha$ was defined through \equ{algnumb} and $\tilde{a}_{j}$ are rational numbers properly chosen.

For each value of $z$, following the notation in \equ{diophevec}, we have:
\beqno
a_{j}=\frac{\tilde{a}_{j}}{z}\quad\text{for}\quad j=1,\dots,4, \qquad \u b=(-\frac{k_{3,1}}{k_{1}},-\frac{k_{3,2}}{k_{2}})\ ,
\eeqno 
where $a_{j}$ are the rational coefficients of $\A$. As $z$ tends to infinity, the Diophantine frequencies $\u{\omega}^{(k_{1},k_{3,1}),(k_{2},k_{3,2})}_{z}$ converge to $(b_1,b_2,1)$ from above or below.

\section{An application to problems of rotational dynamics in Celestial Mechanics}\label{sec:results}

In this {section}, we consider two problems of rotational dynamics in Celestial Mechanics, precisely the spin-orbit model and the spin-spin-orbit model (Section~\ref{sec:model}), which are described by a 1D time-dependent and a 2D time-dependent Hamiltonian, respectively. We present the results of the algorithm described in Section~\ref{sec:optimization} when applied to the spin-orbit model (Section~\ref{sec:SOresults}) and to the spin-spin-orbit problem (Section~\ref{sec:SSresults}), presenting the results of the action confinement and the stability time.

\subsection{The spin-orbit and spin-spin-orbit models}\label{sec:model}

Let us consider a system of two rigid ellipsoidal bodies $\S_1$ and $\S_2$, with masses $M_{S_1}$ and $M_{S_2}$, and semi-axes $a_1,b_1,c_1$ and $a_2,b_2,c_2$, respectively. We assume that the two bodies move on Keplerian ellipses around their barycenter. We set $(r(t),f(t))$ to be the radial distance and the true anomaly of $S_1$ as it orbits around $S_2$. For simplicity, we will further assume that the spin axes of both bodies are perpendicular to the orbital plane and coincide with their semi-axes $c_{1}$ and $c_2$.

Let $\underline{q}=(q_1,q_2)$ be the corresponding spin angles of the two bodies with $\underline{p}=(p_1,p_2)$ the associated conjugate momenta. If $\{I^{(k)}_i\}^{k=1,2}_{i=a,b,c}$ are the the principal moments of inertia of each body ($k=1,2$) and each direction ($i=a,b,c$), then we can express the Hamiltonian describing the dynamics of $\S_1$ and $\S_2$ as 
\beq{GeneralHam}
\mathcal{H}(\underline{q},\underline{p},r(t),f(t))= \frac{p_1^2}{2I^{(1)}_c}+\frac{p_2^2}{2I^{(2)}_c}+V(r(t),f(t),\underline{q}),
\eeq
where $\mu=G(M_{S_1}+M_{S_2})$ and $G$ is the gravitational constant. Following  \cite{Misquero}, \cite{CGM} (see also \cite{BCL25}), the potential $V(r(t),f(t),\underline{q})$ takes the form
\beq{GeneralPotential}
V(r(t),f(t),\underline{q})=\sum_{n=1}^{\infty}\mathcal{V}_{n}(r(t),f(t),\underline{q}) 
\eeq
with
\beq{GeneralPotential}
\mathcal{V}_{n}(r,f,\underline{q})=-\frac{m}{r}\sum_{\substack{l_1+l_2=n\\m_1=-l_1\\m_2=-l_2}}^{\substack{m_1=l_1\\m_2=l_2}}C^{l_1,l_2}_{m_1,m_2}\mathcal{Z}^{(1)}_{2l_1,2m_1}\mathcal{Z}^{(2)}_{2l_2,2m_2}\left(\frac{R_1}{r}\right)^{2l_1}\left(\frac{R_2}{r}\right)^{2l_2}\cos(2m_1(q_1-f)+2m_2(q_2-f))\ ,
\eeq
where $m=GM_{S_1}M_{S_2}$, and $R_1$, $R_2$ are the mean equatorial radii of the two bodies. In the above expression the constants $C^{l_1,l_2}_{m_1,m_2}$ are given by
\begin{multline}\label{eq:ConstantsPotential}
    C^{l_1,l_2}_{m_1,m_2}=\frac{(2(l_1 + l_2)-2(m_1 + m_2))!\ (2(l_1 + l_2)+2(m_1 + m_2))!}{((l_1 + l_2) - (m_1 + m_2))!\ ((l_1 + l_2) + (m_1 + m_2))!} \\\times\frac{(-1)^{(l_1 + l_2 - m_1 - m_2)}4^{-(l_1 + l_2)}}{
  \sqrt{(2l_1 - 2m_1)!\ (2l_2 - 2m_2)!\ (2l_1 + 2m_1)!\ (2l_2 + 2m_2)!}}
\end{multline}
and the constants $\mathcal{Z}^{(k)}_{l,m}$ are computed as
\begin{multline}\label{eq:StokesCoefs}
\mathcal{Z}^{(k)}_{l,m}=\frac{3}{4\pi}\frac{1}{R^l}\sqrt{\frac{(l-m)!}{(l+m)!}}\int_{(X,Y,Z)\in B(\underline{0},1)}P_{l,m}\left(\frac{c_{k}Z}{\sqrt{a_{k}^2X^2 + b_{k}^2Y^2 + c_{k}^2Z^2}}\right)\\\frac{Re\left((a_{k}X-ib_{k}Y)^m\right)(a_{k}^2X^2 + b_{k}^2Y^2 + c_{k}^2Z^2)^{l/2}}{(a_{k}^2X^2 + b_{k}^2Y^2)^{m/2}}dXdYdZ\ ,
\end{multline}
where $P_{l,m}(x)$ are the Legendre polynomials. We notice that the potential \equ{GeneralPotential} can be expressed as a function of the mean anomaly and the eccentricity using Lagrange's inversion theorem for Kepler's equation (\cite{Hagihara_1970_V1}, \cite{Whittaker_Watson_2021}).

Finally, we will consider the physical units to be such that $(M_{S_1}+M_{S_2})=1$, $G=a^3$, and $(I^{(1)}_{c}+I^{(2)}_{c})=1$.

\subsubsection{The spin-orbit model}\label{sec:SO}

Considering only the first term of the potential \equ{GeneralPotential} for $n=1$, we get\footnote{{For bodies which do not present a highly irregular shape, the approximation of the potential with $n=1$ represents a good starting model, since the coefficients $2c_k^2-a_k^2-b_k^2$ and $a_k^2-b_k^2$ are small.}}
\beq{SpiOrbitPotential}
V(r(t),f(t),\underline{q})=-\frac{m}{20 r(t)^3}\sum_{k=1,2}\Bigl[(2c_k^2-a_k^2-b_k^2) -3(a_k^2-b_k^2)\cos(2q_k-2f(t))\Bigl]\ .
\eeq
In this case, the spins $\underline{q}=(q_1,q_2)$ of the two bodies are separated, and therefore the Hamiltonian can be split into two 1D time-dependent Hamiltonians. 

For simplicity, let us consider the restricted case with $M_{S_1}\simeq 0$ and $c_k^2=(a_k^2+b_k^2)/2$ for {both bodies to eliminate} the constant terms in \equ{SpiOrbitPotential}. 

Under the above assumptions, the Hamiltonian for the body $\S_1$ is (see also \cite{Celletti2010})
\beq{SpiOrbitHam}
\mathcal{H}(q_1,p_1,t)= \frac{p_1^2}{2}-\frac{\varepsilon}{2}\frac{a^3}{r(t)^3}\cos(2q_1-2f(t))\ , 
\eeq
where we set $p_1=p_1/I^{(1)}_{c}$, $\varepsilon=3/2\, (I^{(1)}_b-I^{(1)}_a)/I^{(1)}_c=3M_{S_1}(a_1^2-b_1^2)/(10 I^{(1)}_c)$.
We express the Hamiltonian \equ{SpiOrbitHam} in terms of the mean anomaly by expanding in power series of the eccentricity and retaining terms up to degree 5; the explicit expression is given in Appendix~\ref{app:SO}. 

Finally, in order to be consistent with the literature, let us give the following definition of a spin-orbit resonance.
\begin{definition}\label{def4}
A spin-orbit resonance of {type} $k_2:k_1$ is a resonance of order $(k_1,-k_2)$ in the sense of Definition~\ref{resgeneral} for the Hamiltonian system \equ{SpiOrbitHam}.
\end{definition}
This definition physically implies that the body $S_1$ makes $k_1$ rotations within $k_2$ orbital revolutions.

\subsubsection{The spin-spin-orbit model}\label{sec:SS}
The interaction between the spin angles of the two bodies appears in the second term of the potential \equ{GeneralPotential}. More specifically, considering again the simpler case $c_k^2=(a_k^2+b_k^2)/2$, one finds 
\beqa{V2}
\mathcal{V}_{2}(r(t),f(t),\underline{q})&=&-\frac{m}{r(t)^5}\Biggl\{\sum_{k=1,2}\left(\frac{\varepsilon_{k}I^{(k)}_{c}}{M_{S_k}}\right)^2\left(\frac{5}{112}+\frac{25}{48}\cos(4q_k-4f(t))\right)\nonumber\\ 
&+& \left(\frac{\varepsilon_{1}I^{(1)}_{c}}{M_{S_1}}\right)\left(\frac{\varepsilon_{2}I^{(2)}_{c}}{M_{S_2}}\right)\bigg(\frac{35}{24} \cos(4 f(t) - 2 q_1 - 2 q_2)\nonumber\\
&+&\frac{1}{8} \cos(2 q_1 - 2 q_2)\bigg)\Biggl\} 
\eeqa
with $\varepsilon_k=3/2\ (I^{(k)}_b-I^{(k)}_a)/I^{(k)}_c=3M_{S_k}(a_k^2-b_k^2)/(10 I^{(k)}_c)$. As a result, the Hamiltonian for the spin-spin-orbit model is  given by 
\beq{SpinSpinHam}
\begin{array}{ll}
     \displaystyle\mathcal{H}(\underline{q},\underline{p},t)=&\displaystyle \frac{p_1^2}{2I^{(1)}_{c}}+\frac{p_2^2}{2I^{(2)}_{c}}
     \\~\\~&\displaystyle 
     -\frac{m}{r^3(t)}\sum_{k=1,2}\frac{\varepsilon_kI^{(k)}_{c}}{2M_{S_k}}\cos(2q_k-2f(t))\\~\\~&\displaystyle -\frac{m}{r(t)^5}\displaystyle\sum_{k=1,2}\left(\frac{\varepsilon_{k}I^{(k)}_{c}}{M_{S_k}}\right)^2\left(\frac{5}{112}+\frac{25}{48}\cos(4q_k-4f(t))\right)\\~\\~&\displaystyle -\frac{m}{r(t)^5} \left(\frac{\varepsilon_{1}I^{(1)}_{c}}{M_{S_1}}\right)\left(\frac{\varepsilon_{2}I^{(2)}_{c}}{M_{S_2}}\right)\left(\frac{35}{24} \cos(4 f(t) - 2 q_1 - 2 q_2) +\frac{1}{8} \cos(2 q_1 - 2 q_2)\right).
\end{array}
\eeq
Like for the spin-orbit case, the above Hamiltonian can be written as a function of the mean anomaly in a power series of the eccentricity by expressing it as a function of the eccentric anomaly and applying Lagrange's inversion theorem  (see Appendix~\ref{app:SS}).

Similarly to the spin-orbit case, we can give a definition for the spin-spin-orbit resonance.

\begin{definition}
A spin-spin-orbit resonance of {type} $k_3:k_1:k_2$ is a resonance of order $(k_1,k_2,-k_3)$ in the sense of Definition~\ref{resgeneral} for the Hamiltonian system \equ{SpinSpinHam}.
\end{definition}

Additionally, let us define a special case of a spin-spin-orbit resonance.

\begin{definition}
A spin-orbit/spin-orbit resonance of {type} $(k_{3,1}:k_1)_{S_1}$,$(k_{3,2}:k_2)_{S_2}$ occurs when there is an intersection between the spin-spin-orbit resonances of {types} $k_{3,1}:k_1:0$ and  $k_{3,2}:0:k_2$.
\end{definition}

In other words, a frequency vector $\u{\omega}=(\omega_1,\omega_2,1)$ is in a spin-orbit/ spin-orbit resonance $k_{3,1}:k_1$, $k_{3,2}:k_2$ when 
\beq{res1Dbisspin}
k_1\omega_1-k_{3,1}\ =\ 0\ ,\qquad k_2\omega_2-k_{3,2}=0\ . 
\eeq

\subsection{Stability estimates for the spin-orbit model}\label{sec:SOresults}

The stability estimates of Theorem~\ref{thm:nonres}, using the optimization algorithm described in Section~\ref{sec:optimization}, are implemented for the spin-orbit model (Section ~\ref{sec:SO}). We perform $J$ perturbation steps considering as normal form $\mathcal{N}(p_{1},p_{3})=p^2_{1}/2+p_{3}$ and neglecting terms of magnitude smaller than $10^{-20}$. We apply Theorem ~\ref{thm:nonres} to $\mathcal{H}^{(J)}$ using as frequency vector $\u\omega(\u p)=\nabla_{\u p}
h^{(J)}(\u p)$ and computing the norm of the perturbing function $\mathcal{R}^{(J)}$ following Appendix \ref{app:norm}. The results of the optimization procedure are then back-transformed to the original coordinates, using relation \equ{backcoord}.

To fix the grid of initial conditions, we consider non-resonant initial conditions  $(\underline{p}_{0},\underline{q}_{0})$, corresponding to a sequence of Diophantine frequencies like those introduced in Section~\ref{sec:1D}.

From Theorem ~\ref{thm:nonres}, we notice that the initial conditions on the angles do not influence the stability estimates and thus they can be set to zero; hence, we choose the following values:
$$(\underline{p}(0),\underline{q}(0))=(p_{1}(0),p_{3}(0),q_{1}(0),q_{3}(0))=(p_{1}(0),0,0,0)\ ,$$
where we have set to zero also the initial time $q_{3}(0)$ and its {conjugate} action $p_{3}(0)$.
 
For a given $k_2:k_1$ spin-orbit resonance, according to Definition ~\ref{def4} and \equ{SOirr} we generate a sequence of non-resonant initial conditions,  
corresponding to Diophantine frequency vectors such that
\beq{seqdioph}
\underline{\omega}(p_{1}(0))_{z}=(\Gamma_{z,s}^{(k_1,k_2)},1) \qquad\text{or}\qquad
\underline{\omega}(p_{1}(0))_{z}=(\Delta_{z,s}^{(k_1,k_2)},1)\  
\eeq
with an index $z\in\integer\backslash\ \{0\}$ and for a fixed value of $s$.
Therefore, for a given $k_{2}:k_{1}$ spin-orbit resonance, the invariant tori with frequency vectors \equ{seqdioph} bound the resonance from above or below in phase space (compare with \cite{Celletti2010}).

\begin{figure}[htbp]
\centering

\begin{minipage}{0.49\textwidth}
    \centering
    \includegraphics[width=\linewidth]{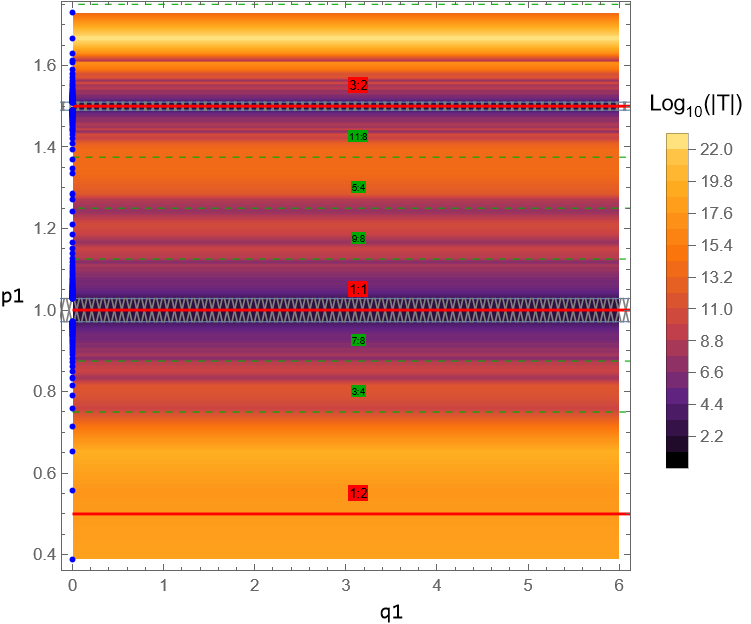}
    \par\footnotesize (a)
\end{minipage}%
\hfill
\begin{minipage}{0.49\textwidth}
    \centering
    \includegraphics[width=\linewidth]{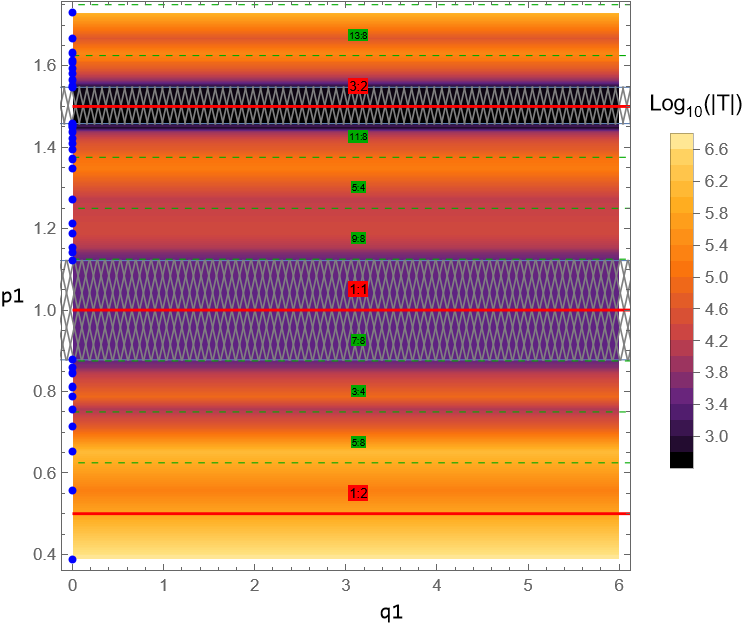}
    \par\footnotesize (b)
\end{minipage}

\vspace{1ex}

\begin{minipage}{0.49\textwidth}
    \centering
    \includegraphics[width=\linewidth]{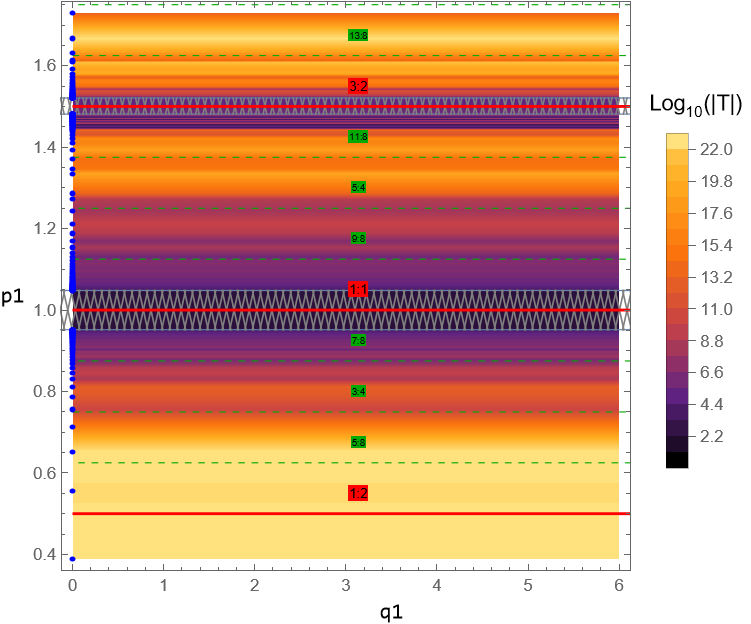}
    \par\footnotesize (c)
\end{minipage}%
\hfill
\begin{minipage}{0.49\textwidth}
    \centering
    \includegraphics[width=\linewidth]{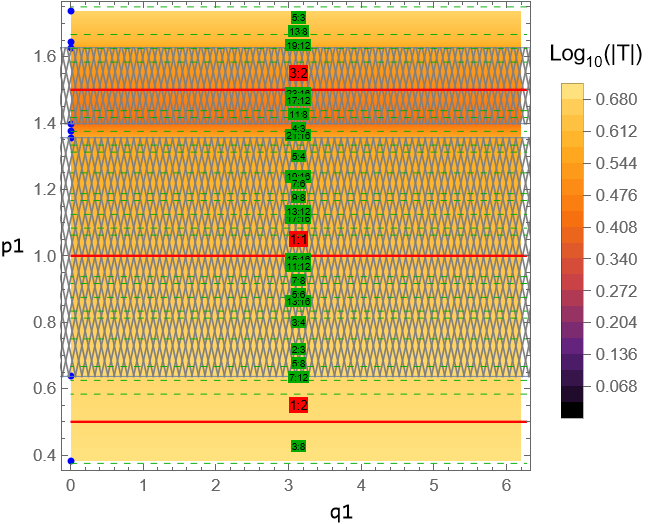}
    \par\footnotesize (d)
\end{minipage}

\vspace{1ex}
\caption{\small  Stability time for $z=2,\dots,100$ with $s=1.6$ ($1:1$ resonance) and $s=0.6$ ($3:2$ resonance) for (a) $\varepsilon=10^{-4}$ after two perturbative steps, (b) $\varepsilon=10^{-3}$ after two perturbative steps, (c) $\varepsilon=10^{-3}$ after three perturbative steps, (d) $\varepsilon=10^{-2}$ after three perturbative steps (d).}
\label{fig11}
\end{figure}

In Figure ~\ref{fig11}, we show the stability time $T$ (in a $\log_{10}$ color scale) on the $(q_{1},p_{1})$-plane. The stability time is obtained for the grid of initial conditions associated with two specific spin-orbit resonances, $1:1$ and $3:2$, considering different perturbing parameters $\varepsilon$, and applying a different number of perturbation steps $J$.

On the $(q_{1},p_{1})$-plane the initial conditions are indicated by the blue points located along the $p_{1}$-axis. The main resonances, precisely the $1:1$, $1:2${,} and $3:2$, are marked by horizontal red lines and the secondary resonances appear as green dashed lines. Finally, the gray cross-hatched regions are the resonant domains for which the optimization algorithm fails to find parameters that satisfy the applicability conditions of Theorem ~\ref{thm:nonres}.

Figures~\ref{fig11}(a) and~\ref{fig11}(b) show a decrease of the stability time as the $1:1$ and the $3:2$ spin-orbit resonances are approached. Similarly, secondary resonances of different orders, such as the $5:4$ or $3:4$ resonances, significantly affect the stability times of nearby initial conditions, resulting in the observed low values. In Figure 1, we can also see regions where stability results are affected, even if there are no main or secondary resonances nearby. {These regions are an indirect effect of the requirement that the domain is $\alpha$-$K$ non-resonant according to Definition~\ref{nonres}; this condition implies that the 
system has to be away from any commensurability of the form $\underline{k}\cdot\underline{\omega}$ for any order $\underline{k}$ such that $\norm{\underline{k}}_1\leq K$}.
Furthermore, Figure~\ref{fig11}(a) shows fewer secondary resonances compared to Figure~\ref{fig11}(b), since in Figure~\ref{fig11}(a), by choosing $\varepsilon=10^{-4}$ and applying two perturbative steps, some secondary resonances become negligible.

In Figure ~\ref{fig11}(b), as expected, having chosen a larger value of $\varepsilon$, the stability times are lower, either near or far from the resonances.

Setting \(\varepsilon=10^{-3}\), the algorithm does not provide stability estimates after a single perturbative step, since the norm of the remainder is too large to satisfy the assumptions of Theorem~\ref{thm:nonres}, while it works when we apply 2 or 3 perturbative steps (see Figures ~\ref{fig11}(b) and  ~\ref{fig11}(c)). Moreover, passing from \(J=2\) to \(J=3\), we obtain stability results, under the non-resonant condition, that are significantly closer to the resonant regions (see also Figure~\ref{fig22}). We stress that, for those initial values where the algorithm was already working, additional perturbative steps improve the results.

At \(\varepsilon=10^{-2}\), the algorithm only succeeds after \(J=3\) perturbative steps, see Figure ~\ref{fig11}(d), where we notice that the widths of the resonances become significantly large. The resonant domains for which the optimization algorithm fails to find parameters that satisfy the applicability conditions of Theorem ~\ref{thm:nonres} include several secondary resonances, especially around the 1:1 resonance.

\begin{figure}[htbp]
\centering

\begin{minipage}{0.49\textwidth}
    \centering
    \includegraphics[width=\linewidth]{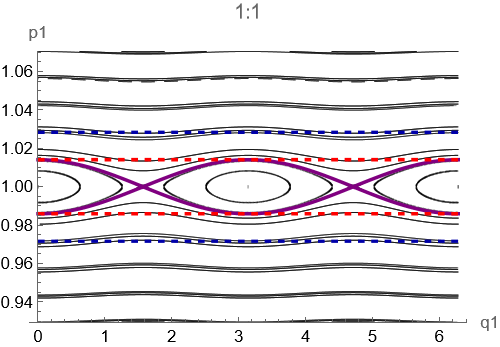}
    \par\footnotesize (a)
\end{minipage}
\hfill
\begin{minipage}{0.49\textwidth}
    \centering
    \includegraphics[width=\linewidth]{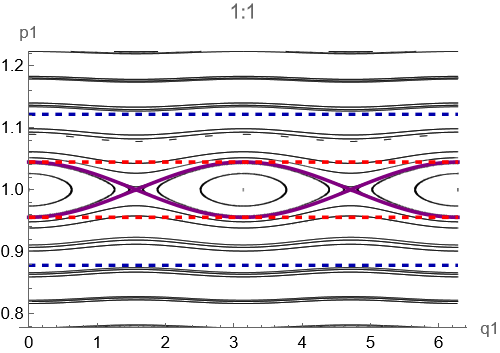}
    \par\footnotesize (b)
\end{minipage}

\vspace{1ex}

\begin{minipage}{0.49\textwidth}
    \centering
    \includegraphics[width=\linewidth]{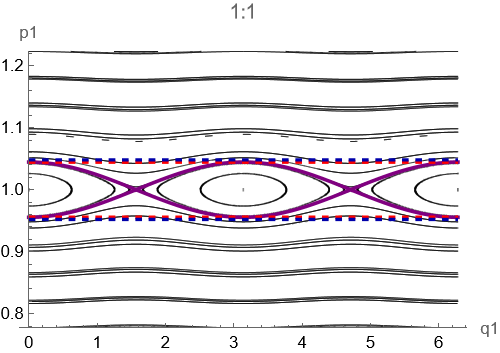}
    \par\footnotesize (c)
\end{minipage}%
\hfill
\begin{minipage}{0.49\textwidth}
    \centering
    \includegraphics[width=\linewidth]{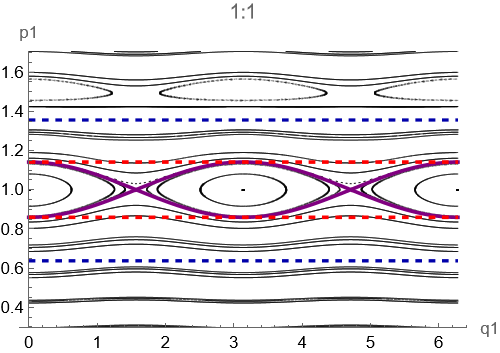}
    \par\footnotesize (d)
\end{minipage}

\vspace{1ex}
\caption{\small Comparison between the last value of $p_{1}$ for which the algorithm provides results (blue line) and a resonant normal form approximation of the width of the 1:1 resonance (red line) for (a) $\varepsilon=10^{-4}$ after two perturbative steps, (b) $\varepsilon=10^{-3}$ after two perturbative steps, (c) $\varepsilon=10^{-3}$ after three perturbative steps, (d) $\varepsilon=10^{-2}$ after three perturbative steps.}
\label{fig22}
\end{figure}

\begin{figure}[htbp]
\centering

\begin{minipage}{0.49\textwidth}
    \centering
    \includegraphics[width=\linewidth]{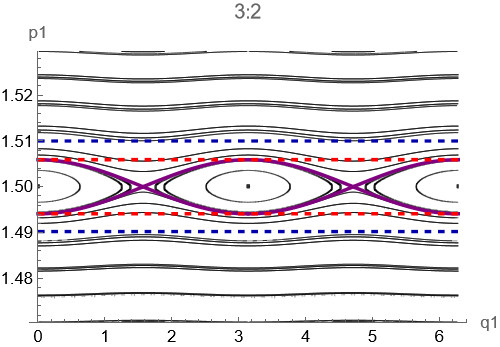}
    \par\footnotesize (a)
\end{minipage}%
\hfill
\begin{minipage}{0.49\textwidth}
    \centering
    \includegraphics[width=\linewidth]{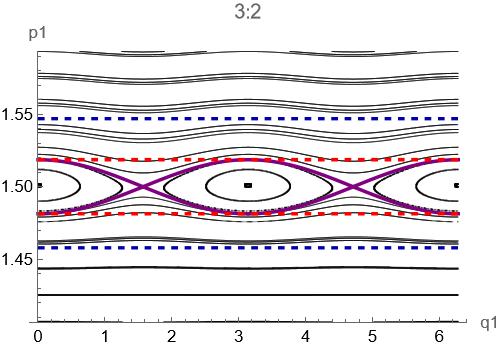}
    \par\footnotesize (b)
\end{minipage}

\vspace{1ex}

\begin{minipage}{0.49\textwidth}
    \centering
    \includegraphics[width=\linewidth]{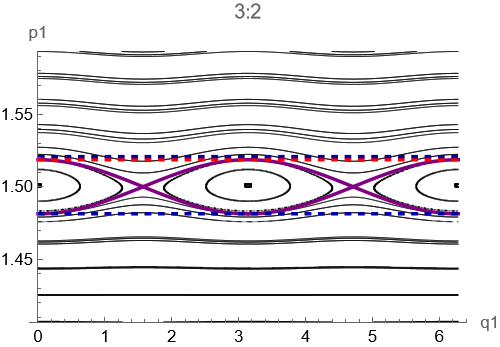}
    \par\footnotesize (c)
\end{minipage}%
\hfill
\begin{minipage}{0.49\textwidth}
    \centering
    \includegraphics[width=\linewidth]{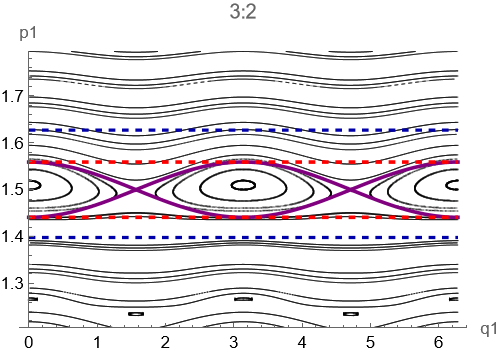}
    \par\footnotesize (d)
\end{minipage}

\vspace{1ex}
\caption{\small Comparison between the last value of $p_{1}$ for which the algorithm provides results (blue line) and a resonant normal form approximation of the width of the 3:2 resonance (red line) for (a) $\varepsilon=10^{-4}$ after two perturbative step, (b) $\varepsilon=10^{-3}$ after two perturbative steps, (c) $\varepsilon=10^{-3}$ after three perturbative steps, (d) $\varepsilon=10^{-2}$ after three perturbative steps.}
\label{fig33}
\end{figure}

Finally, in Figures~\ref{fig22}  and ~\ref{fig33} we compare, respectively, the last value of $p_{1}$, back transformed, for which the algorithm still provides results (blue line) with a resonant normal form approximation of the width (red line) of the $1:1$ (Figure ~\ref{fig22}) and $3:2$ (Figure ~\ref{fig33}) spin-orbit resonances. Panels (b) and (c) provide the results for the same value $\eps=10^{-3}$ but, respectively, after two and three perturbative steps, thus showing that implementing perturbation theory combined with the estimates of Theorem~\ref{thm:nonres} leads to a significant improvement in terms of proximity to the resonances.

\subsection{Stability estimates for the spin-spin-orbit model}\label{sec:SSresults}
In this Section, we apply the optimization procedure to the spin-spin-orbit model described in  Section ~\ref{sec:SS}. 
As for the spin-orbit model{,} we implement $J$ perturbation steps considering, in this case, as normal form $\mathcal{N}(p_{1},p_{2},p_{3})=p^2_{1}/2+{p^2_{2}/2}+p_{3}$. We keep terms of order smaller than $(\varepsilon_{1}^{a}\varepsilon_{2}^{b})$, with $a+b=5$, and magnitude bigger than   $10^{-20}$ while neglecting the rest. We apply the optimization algorithm (Section ~\ref{sec:optimization}) to $\mathcal{H}^{(J)}$ using as frequency vector, $\u\omega(\u{p})=\nabla_{\u p}
h^{(J)}(\u p)$ and computing the norm of the perturbing function $\mathcal{R}^{(J)}$ following Appendix \ref{app:norm}. Finally, we show the results back-transformed to the original coordinates.

As in the spin-orbit model, the angles, $(q_{1}(0)$, $q_{2}(0))$, the initial time $q_{3}(0)$, and the associated dummy action $p_{3}(0)$ can be set to zero. Therefore, the choice of the initial values is reduced to selecting $(p_{1}(0)$, $p_{2}(0))$.

For a given spin-orbit/spin-orbit resonance of {type} $(k_{3,1}:k_{1})_{S_1},(k_{3,2}:k_{2})_{S_2}$, according to \equ{diophvectors} and varying suitably the coefficients of the matrix $\A$ in \equ{diophevec}, we construct a radial grid of initial conditions $(p_{1}(0),p_{2}(0))_{z}$, such that:
\beq{omegaspispin}
\underline{\omega}(p_{1}(0),p_{2}(0))_{z}=(\omega_{1,z}^{(k_{1},k_{3,1})},\omega_{2,z}^{(k_{2},k_{3,2})},1) 
\eeq
with the components given by 
$$
\omega_{1}(p_{1}(0),p_{2}(0))_{z}\xrightarrow{z \to \infty} -\frac{k_{3,1}}{k_{1}}, \qquad \omega_{2}(p_{1}(0),p_{2}(0))_{z} \xrightarrow{z \to \infty}-\frac{k_{3,2}}{k_{2}}\ ,\qquad z\in \mathbb{Z^{+}}\setminus\{0\}\ . 
$$
\begin{figure}[htbp]
\centering

\begin{minipage}{0.46\textwidth}
    \centering
    
    \includegraphics[width=\linewidth]{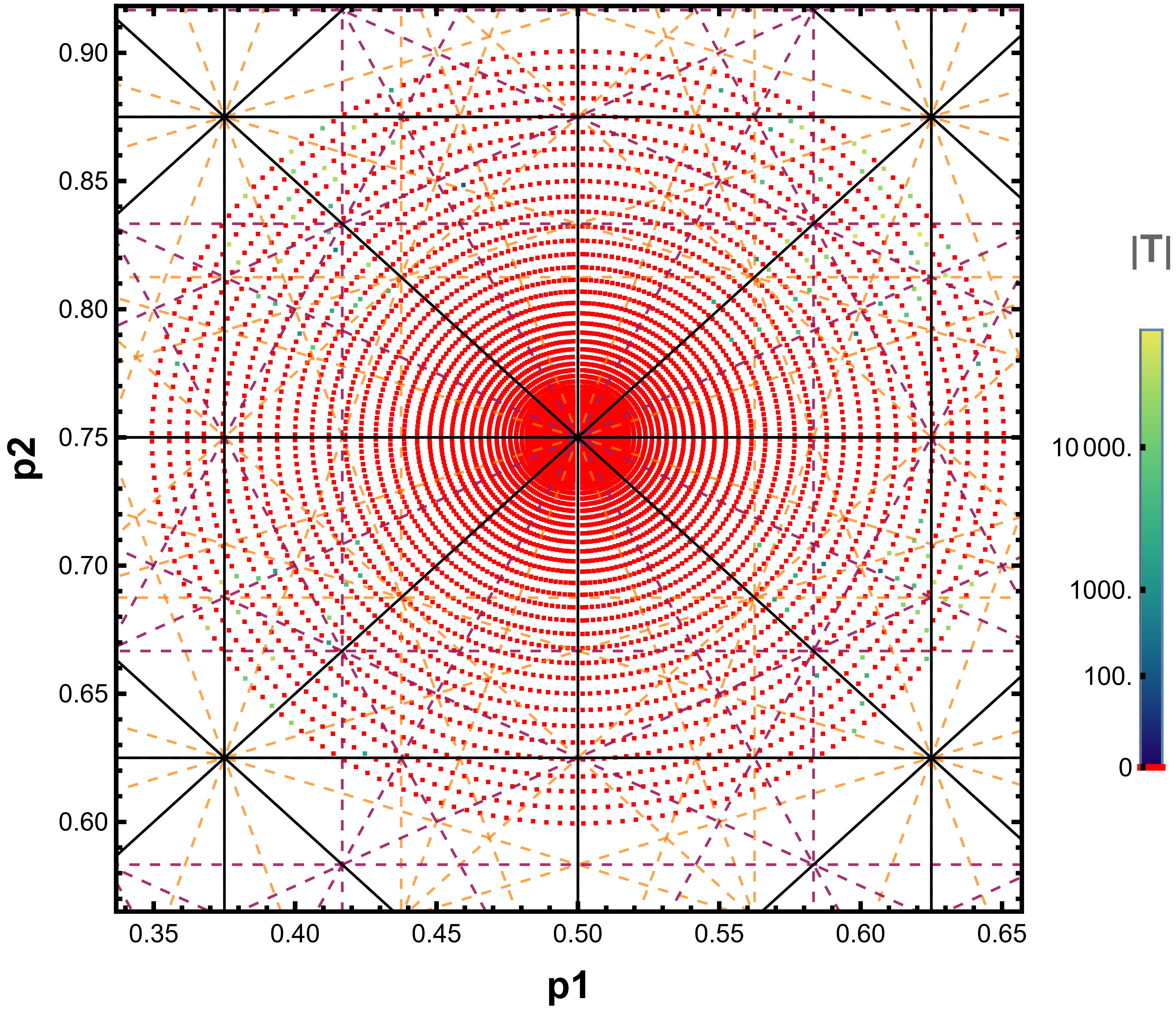}
    \par\footnotesize (a)
\end{minipage}%
\hfill
\begin{minipage}{0.46\textwidth}
    \centering
    
    \includegraphics[width=\linewidth]{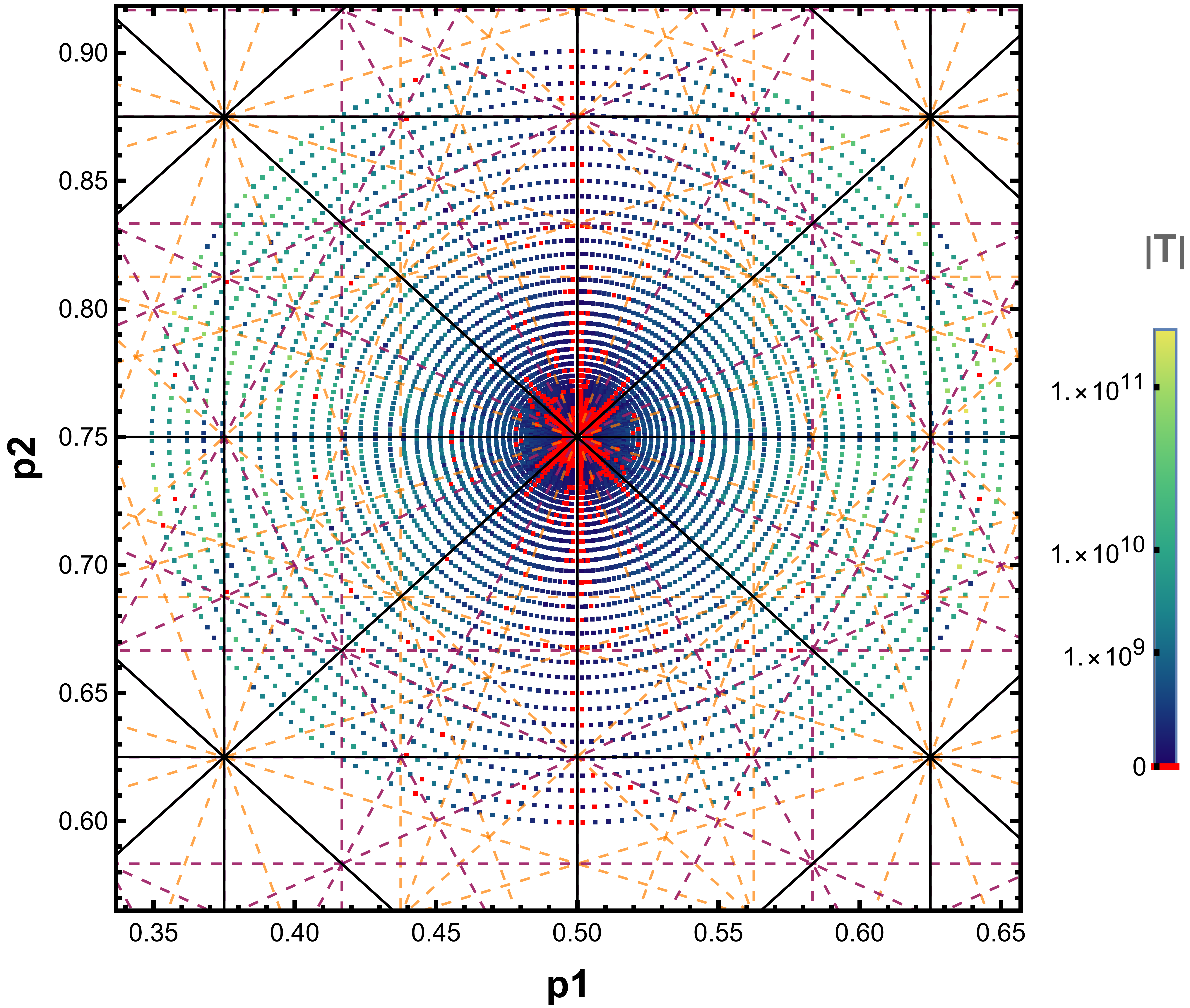}
    \par\footnotesize (b)
\end{minipage}

\vspace{1ex}

\begin{minipage}{0.46\textwidth}
    \centering
    \includegraphics[width=\linewidth]{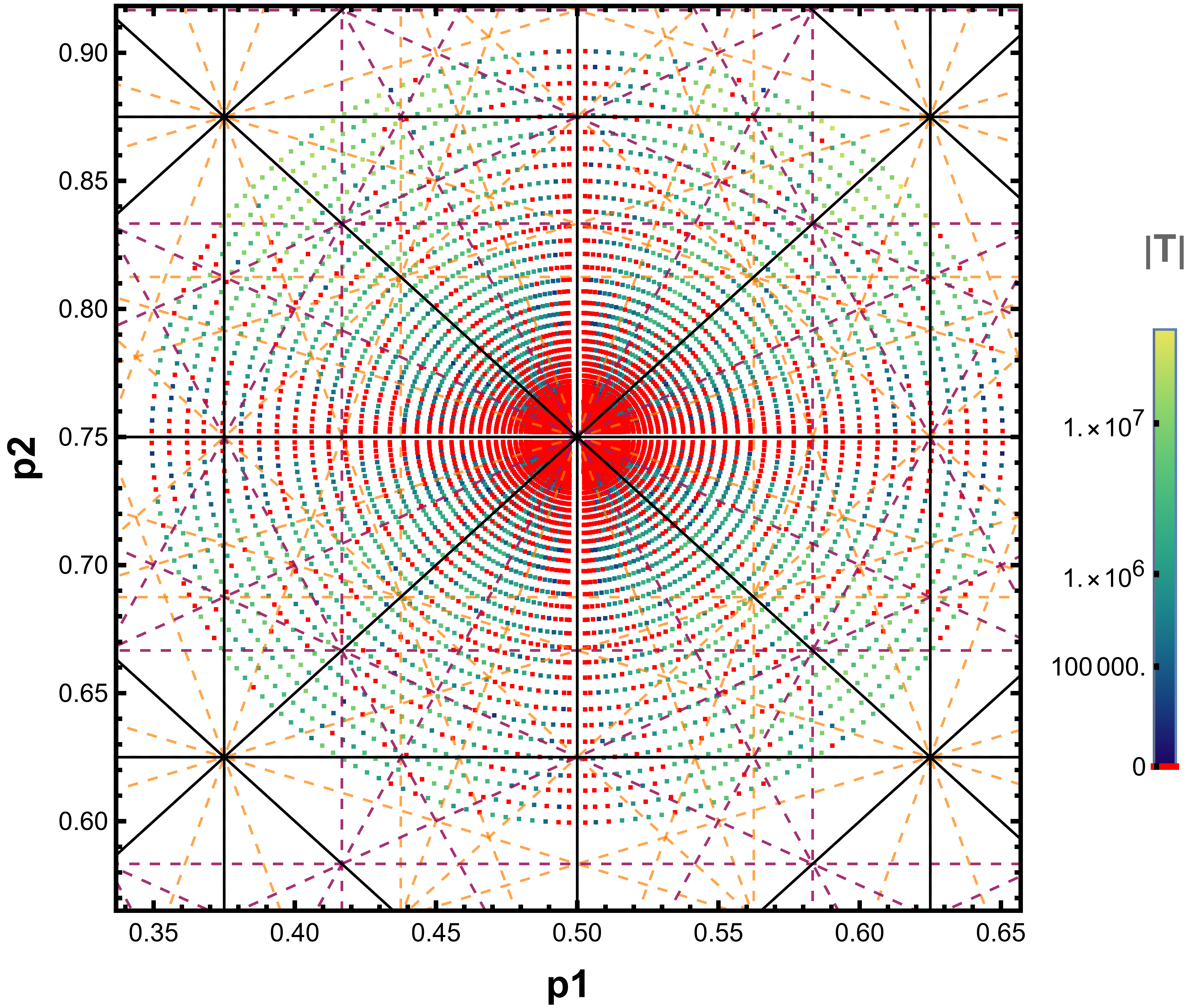}
    \par\footnotesize (c)
\end{minipage}
\hfill
\begin{minipage}{0.46\textwidth}
    \centering
    \includegraphics[width=\linewidth]{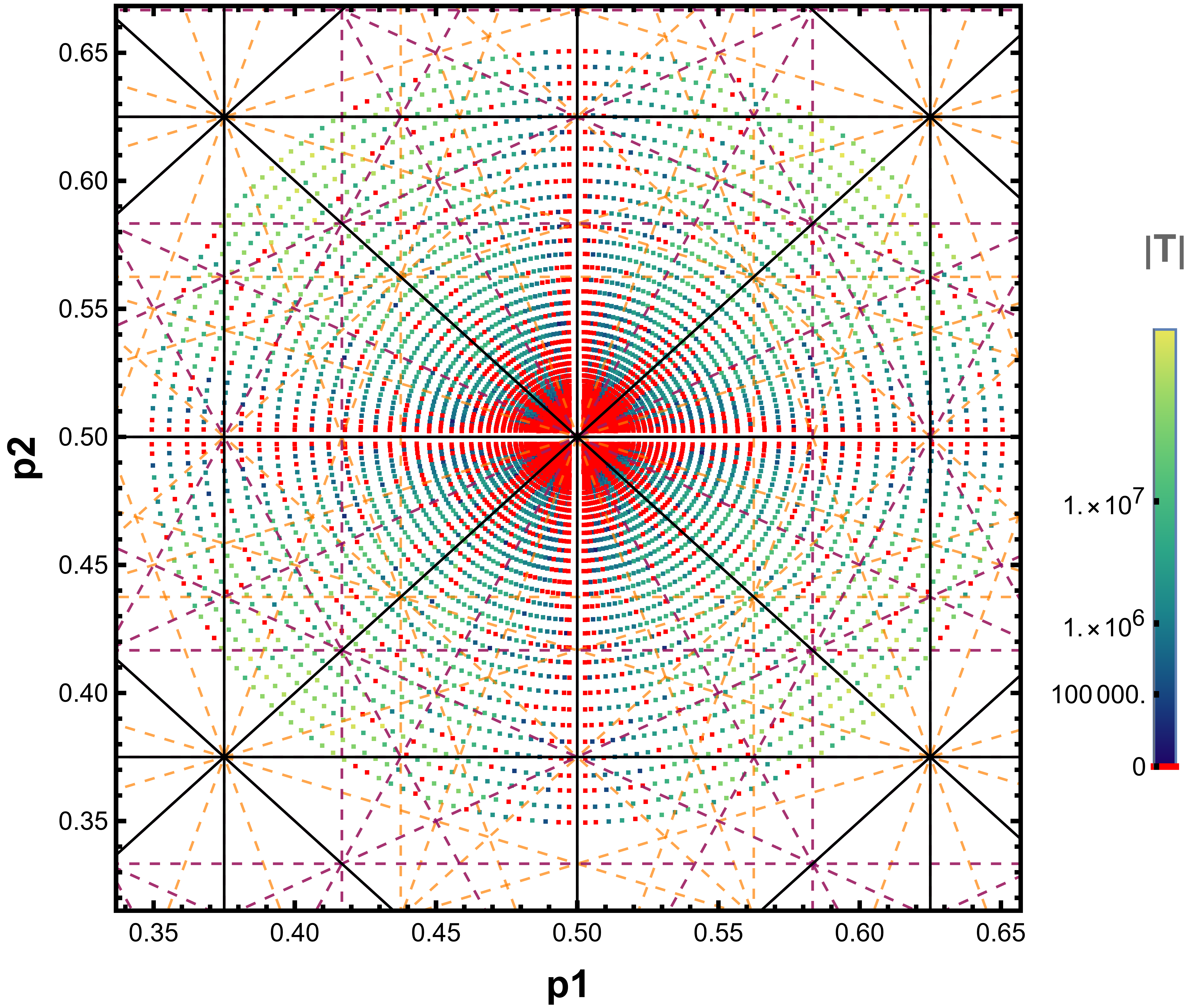}
    \par\footnotesize (d)
\end{minipage}
\vspace{-0.3cm}
\caption{\small Stability time around the spin-spin-orbit resonance $(1:1)_{S_1},(3:2)_{S_2}$, for $\varepsilon_{1}=\varepsilon_{2}=10^{-5}$ after: (a) one perturbative step, (b) two perturbative steps. 
Stability time around the spin-spin-orbit resonance: (c) $(1:1)_{S_1},(3:2)_{S_2}$, for $\varepsilon_{1}=3\cdot 10^{-5}$ and  $\varepsilon_{2}=10^{-4}$ after two perturbative steps, (d) $(1:1)_{S_1},(1:1)_{S_2}$, for $\varepsilon_{1}=\varepsilon_{2}=3\cdot10^{-5}$ after two perturbative steps.}
\label{fig44}
\end{figure}

\begin{figure}[htbp]
\centering

\begin{minipage}{0.46\textwidth}
    \centering
    \includegraphics[width=\linewidth]{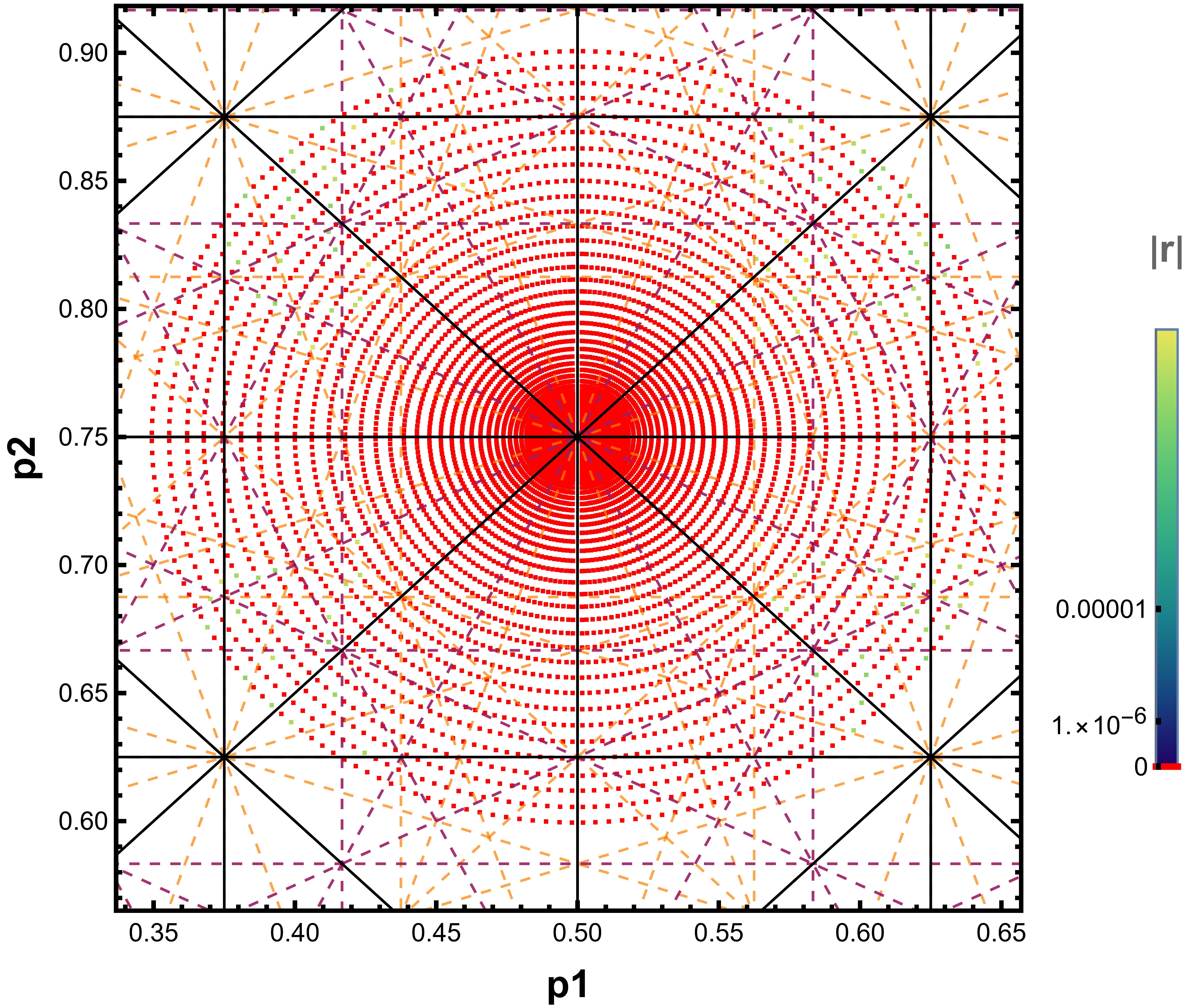}
    \par\footnotesize (a)
\end{minipage}
\hfill
\begin{minipage}{0.46\textwidth}
    \centering
    \includegraphics[width=\linewidth]{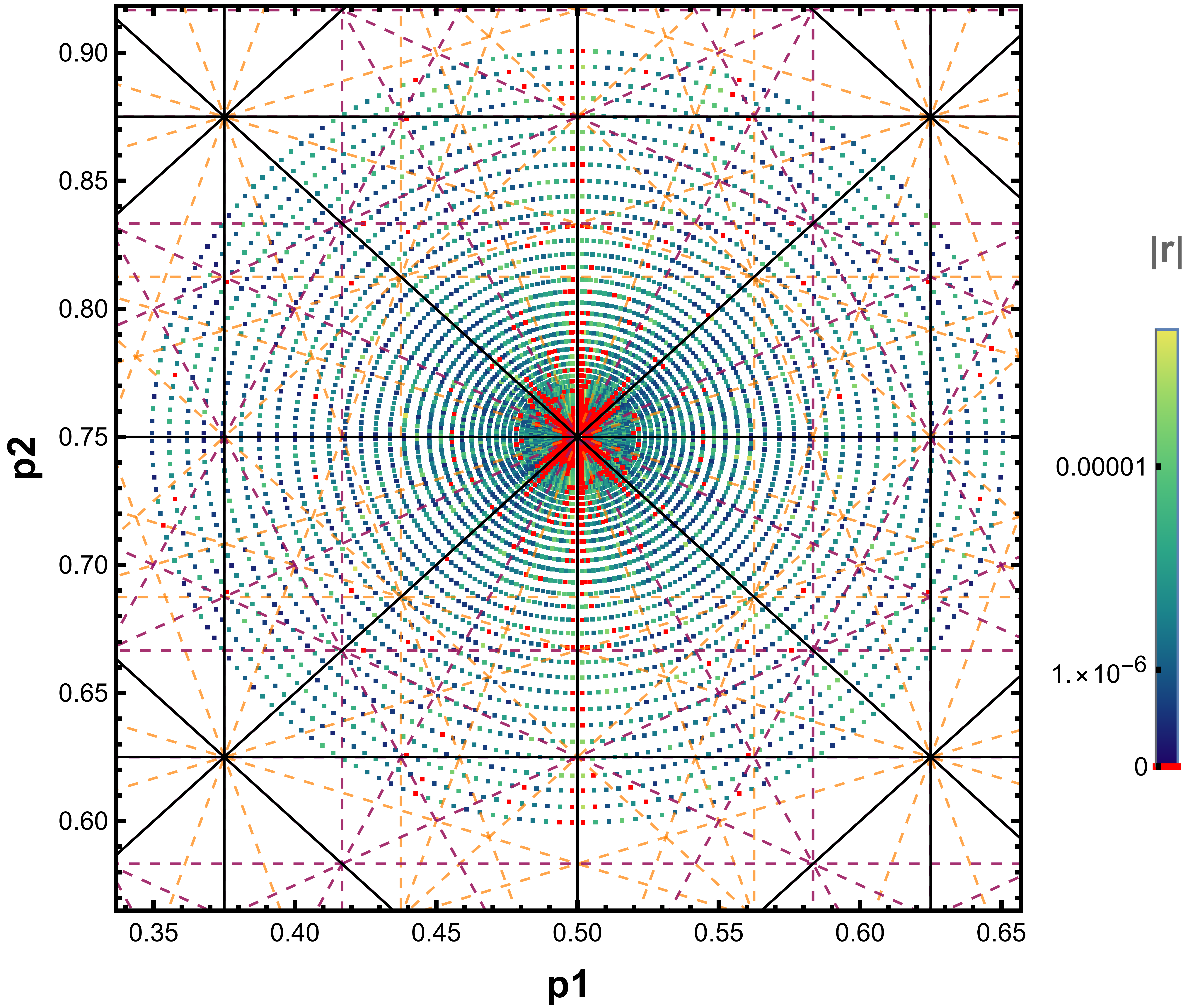}
    \par\footnotesize (b)
\end{minipage}

\vspace{1ex}

\begin{minipage}{0.46\textwidth}
    \centering
    
    \includegraphics[width=\linewidth]{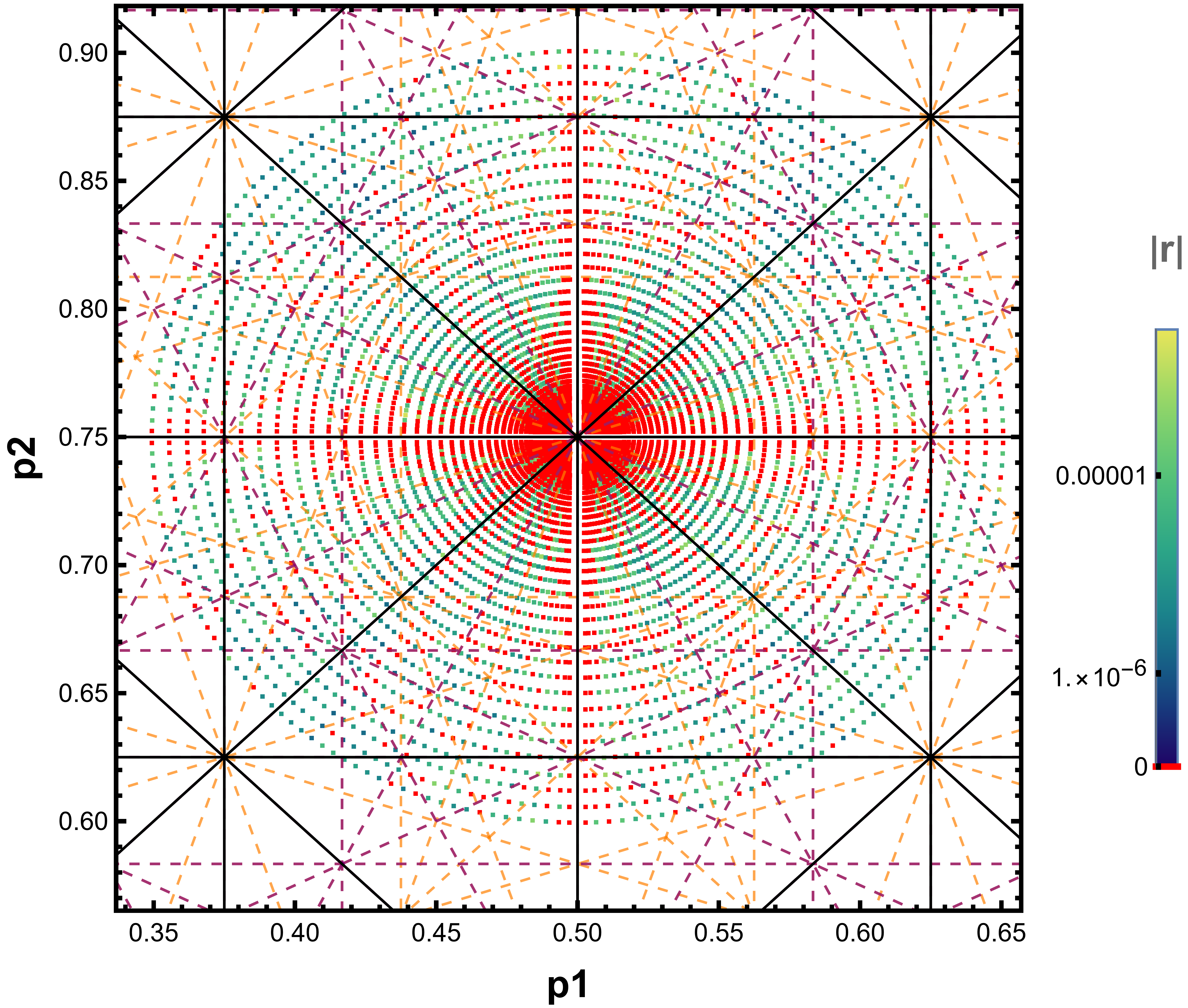}
    \par\footnotesize (c)
\end{minipage}
\hfill
\begin{minipage}{0.46\textwidth}
    \centering
    
    \includegraphics[width=\linewidth]{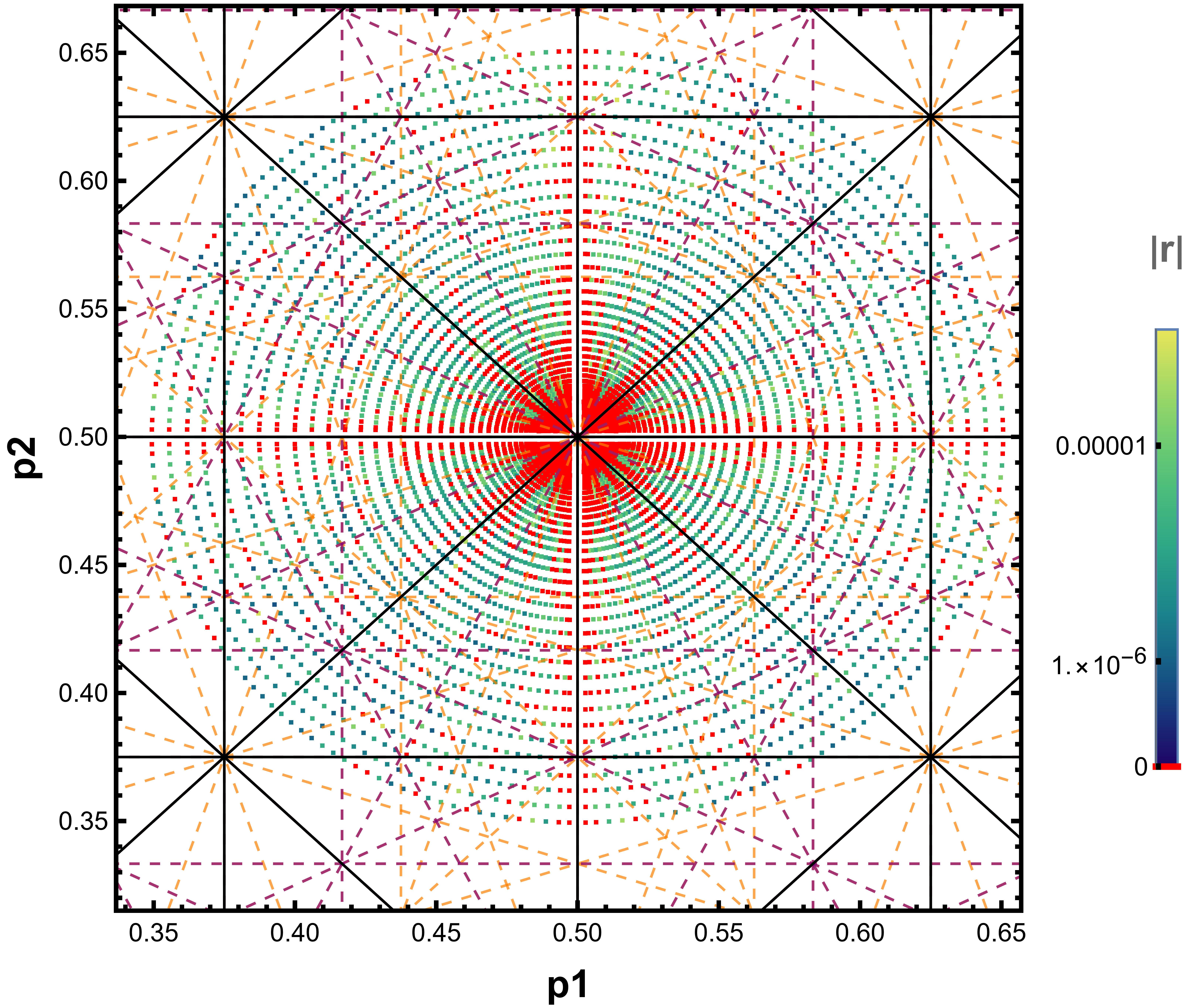}
    \par\footnotesize (d)
\end{minipage}
\vspace{-0.3cm}
\caption{\small Bound on the actions around the spin-spin-orbit resonance $(1:1)_{S_1},(3:2)_{S_2}$, for $\varepsilon_{1}=\varepsilon_{2}=10^{-5}$ after: (a) one perturbative step, (b) two perturbative steps.
Bound on the actions around the spin-spin-orbit resonance: (c) $(1:1)_{S_1},(3:2)_{S_2}$, for $\varepsilon_{1}=3\cdot 10^{-5}$ and  $\varepsilon_{2}=10^{-4}$ after two perturbative steps, (d) $(1:1)_{S_1},(1:1)_{S_2}$, for $\varepsilon_{1}=\varepsilon_{2}=3\cdot10^{-5}$ after two perturbative steps.}
\label{fig55}
\end{figure}

In Figures ~\ref{fig44} and ~\ref{fig55}, the initial conditions $(p_{1}(0),p_{2}(0))_{z}$ are represented with colored dots which form a roughly filled circle in the action plane $(p_1,p_2)$. The color of each dot indicates the value of the stability time $T$ (Figure ~\ref{fig44}) or the action bound $r$ (Figure ~\ref{fig55}) computed by implementing the algorithm.
The red points show initial conditions for which the algorithm does not provide any result (failure points). Within the action plane, the main resonances appear as continuous black lines, whereas secondary resonances are marked by dashed lines, purple for first-order resonances and orange for second-order resonances (we do not show {higher-order} resonances for illustration purposes). We stress that the initial conditions, on the $(p_1,p_2)$ plane, do not lie on the main and secondary resonances, by construction. 
In general, for initial conditions sufficiently close to resonances with large widths, it is more likely that the algorithm will not provide results. For secondary resonances of higher orders, enforced by the $\alpha$ $K$ non-resonant condition,  failure points may not appear in their proximity because of their small widths. However, they can appear near the intersections of such resonances.

Figures ~\ref{fig44}a and \ref{fig44}b show the stability times in the case of the spin-orbit/spin-orbit resonance  $(1:1)_{S_1}, (3:2)_{S_2}$; Figure ~\ref{fig44}a is obtained applying the algorithm to the Hamiltonian after one perturbative step, while Figure ~\ref{fig44}b is obtained after two perturbative steps. 
We notice that the stability times improve significantly performing one more perturbative step. 
Indeed, in Figure~\ref{fig44}a, the algorithm does not provide results for most initial conditions, while applying two perturbative steps (Figure~\ref{fig44}b) results are obtained for most of them. The points of Figure ~\ref{fig44}b for which the algorithm does not provide results are mainly concentrated along the spin-orbit resonances, particularly around the $(1:1)_{S_1}$ with the largest width, and around its intersection with the other resonances. 

Figure ~\ref{fig44}c presents results for the same spin-orbit/spin-orbit resonance after two perturbative steps, considering $\varepsilon_{2}$ larger than $\varepsilon_1$. Most of the failure points are concentrated at the intersection between the selected spin-orbit resonances, which corresponds also to an intersection of other main and secondary resonances of different orders.

Figure \ref{fig44}b and~\ref{fig44}c show how main and secondary resonances affect the stability times. In some cases, they strongly influence the stability with several failure points placed along the resonances; while in other cases, some resonances become negligible (since they have small widths) after one or two perturbative steps.

In Figure ~\ref{fig44}b, where $\varepsilon_1=\varepsilon_2$ the $(1:1)_{S_1}$ spin-orbit resonance is wider than the $(3:2)_{S_2}$, while in Figure ~\ref{fig44}c, where $\varepsilon_2>\varepsilon_1$ the domain of $(3:2)_{S_2}$ is comparable or larger than $(1:1)_{S_1}$. In Figure ~\ref{fig44}d, we consider the spin-orbit/spin-orbit resonance $(1:1)_{S_1},(1:1)_{S_2}$ with $\varepsilon_1=\varepsilon_2=3\cdot 10^{-5}$. Most of the points where the algorithm does not provide results are concentrated along both resonances and their intersection.

Regarding the bounds on the actions (Figure ~\ref{fig55}), the value of the radius $r$ is given on a color scale{,} as for the time $T$ in Figure ~\ref{fig44}. The behaviors are very close to Figure ~\ref{fig44} and therefore similar conclusions apply to Figure ~\ref{fig55}.

\section{Conclusion}\label{sec:conclusion}

The stability of resonances is of paramount importance in many physical problems within Celestial Mechanics, as resonances play a special role in the dynamics of many celestial bodies. In this work, we propose a procedure based on analytical results to analyze the stability in the vicinity of resonances. Our results are obtained by developing effective estimates based on the formulation of Nekhoroshev's theorem given in \cite{Poschel93} for the non-resonant case. Indeed, to avoid main and secondary commensurabilities, we constructed sequences of Diophantine frequencies that tend asymptotically to a fixed resonance or a selected intersection of resonances. Having in mind the applications to model problems of Celestial Mechanics, we employed distinct approaches for constructing the sequences in the one-dimensional and two-dimensional cases.

Exploiting the freedom in the choice of parameters given in \cite{Poschel93}, we developed an optimization algorithm to maximize the stability time. The estimates were further improved by reducing the norm of the perturbing function through a suitable implementation of perturbation theory, followed by the selection of the optimal parameters. 

We considered two model problems in rotational dynamics, the spin-orbit and the spin-spin-orbit problems. We studied the stability in the vicinity of the main spin-orbit and spin-spin-orbit resonances through the computation of non-resonant effective stability estimates for a grid of points constructed along the Diophantine sequences. Our results demonstrate the effectiveness of the procedure in terms of estimates of the stability times and bounds on the variation of the action variables. 

A limitation of the work presented here is the following. 
The spin-orbit and the spin-spin-orbit problems are ruled by perturbing parameters which measure the oblateness of the interacting bodies. In our samples, we have chosen values of the perturbing parameters which are small, though consistent with the astronomical data. However, one could be interested to values of the perturbing parameters, which are significantly larger than those discussed in this paper. We suggest that this analysis would require a different optimization technique as well as the development of alternative methods to deal with highly non-integrable systems. 

\section{Acknowledgements}\label{sec:acknowledgements}
\phantom{.}\vspace{-0.5cm}\\
This research has been conducted during and with
the support of the Italian National Inter-University PhD Programme in Space Science and Technology.\\
Alessia Francesca Guido acknowledges the support of INdAM group G.N.F.M.\\
Anargyros Dogkas acknowledges the support from the Italian Space Agency through the project “Monitoring Asteroids” (grant 2022-33-HH.0).\\

\appendix

\clearpage

\section{Optimization Algorithm}\label{AppendixAlgorithm}

Below we give a sketch of the optimization algorithm (see Section~\ref{sec:effective}) with a short explanation in square brackets.

\phantom{.}\\
\textit{\textbf{Input:}} initial conditions $(\underline{p}_0,\underline{q}_0) \in \mathbb{R}^n\times\mathbb{T}^n$, $r_0$\\
\textit{\textbf{Output:}} $\{T,r\}$\\

\begin{itemize}
    \item[$\blacktriangleright$] \textcolor{gray}{\textit{\big[Initialization of the parameters\big]}}\\
    ~\\
    $\omega  \varleftarrow \nabla_{\underline{p}} h (\underline{p}_{0})$,\hspace{0.3cm}$K_{min} \varleftarrow \max_{\underline{k}\in \K}(\norm{k}_1)$\\
    ~\\
    $r_S \varleftarrow \min\left(\norm{\u{p}_0-x}, \ \ x\in S=\textrm{the set of singularities of } \H(\u{p},\u{q})\right)$\\
    \textcolor{gray}{\textit{\big[Defines the analyticity radius.\big]}}\\
    ~\\
    $r_{d} \varleftarrow \min(r_{S},r_{0})$\\
    ~\\
    $M\varleftarrow \sup_{\underline{p}\in V_{r_{d}}(\underline{p}_{0})}\left(\norm{\partial^2_{\u{p}}h(\u{p})}_{2}\right)$\\

    \item[$\blacktriangleright$] \textcolor{gray}{\textit{\big[Optimization of $\ell$ and $s_0$ for given $K$\big]}}\\
    $\textrm{\textbf{Start Function}}\left(T^{max}(\ell,s_0)\right)$\\
     ~\\
     \phantom{.}\hspace{1cm} $\alpha  \varleftarrow \min_{\underline{k}\in \mathbb{Z}^{n}_{K}}|\underline{\omega}\cdot \underline{k}| $\\
    \phantom{.}\hspace{1cm}\textcolor{gray}{\textit{\big[To define the $\alpha$-$K$ non resonant domain]}}\\
    ~\\
    \phantom{.}\hspace{1cm}\textbf{Maximize} $\displaystyle \E={\frac{1}{2^7\ell}}\ \frac{\alpha}{K}\min\left(r_d,\frac{\alpha}{MK}\left(1-\frac{1}{\ell}\right)\right)$\\
    \phantom{.}\hspace{1cm}\textcolor{gray}{\textit{\big[To determine $\ell\varleftarrow \ell_{optimal}$\big]}}\\
    ~\\
    \phantom{.}\hspace{1cm}$\displaystyle\E\varleftarrow {\frac{1}{2^7\ell_{optimal}}}\ \frac{\alpha}{K}\min\left(r_d,\frac{\alpha}{MK}\left(1-\frac{1}{\ell_{optimal}}\right)\right)$\\
    ~\\
    \phantom{.}\hspace{1cm}$E(s_0)\varleftarrow \norm{\R(\u{p}_0,\u{q}_0)}_{r_0,s_0}$\\
    ~\\
    \phantom{.}\hspace{1cm}\textbf{Find Root} $E(s_0)=\E$\\
    \phantom{.}\hspace{1cm}\textcolor{gray}{\textit{\big[To determine $s_{max}\varleftarrow$ the maximum acceptable value of $s_0$\big]}}\\
    ~\\
     \phantom{.}\hspace{1cm}\textbf{Maximize} $\displaystyle T= {{s_0r}\over {5E(s_0)}}\ \exp\left(\frac{K s_0}{6}\right)$ for $s_0\in(0,s_{max}]$\\
     \phantom{.}\hspace{1cm}\textcolor{gray}{\textit{\big[To determine $s_0\varleftarrow s_{optimal}$\big]}}\\
     ~\\
     \phantom{.}\hspace{1cm}\textbf{Return}$(T(\ell_{optimal},s_{optimal}))$\\
     \textbf{End Function}\\

    \item[$\blacktriangleright$] \textcolor{gray}{\textit{\big[Optimization of $K$\big]}}\\
    $T_{max}\varleftarrow 0$,\hspace{0.3cm} $T^{(1)}_{control}\varleftarrow 0$,\hspace{0.3cm} $T^{(2)}_{control}\varleftarrow 0$\\
    $K\varleftarrow K_{min}$,\hspace{0.3cm} $dK\varleftarrow dK_{initial}$\\
    ~\\
    \textbf{While}\big($dK>dK_{control}$ and $K<K_{control}$ and $N<N_{control}$ and $T_{max}<T_{control}$\big)\\
    \phantom{.}\hspace{1cm}$K\varleftarrow K + dK$\\
    \phantom{.}\hspace{1cm}$T\varleftarrow \textrm{\textbf{Function}}\left(T^{max}(\ell,s_0)\right)$\\
    ~\\
    \phantom{.}\hspace{1cm}\textbf{If}\big($T>T_{max}$\big) $T_{max}\varleftarrow T$\hspace{0.5cm}\textbf{End If}\\
    ~\\
    \phantom{.}\hspace{1cm}\textbf{If}\big($T>T^{(1)}_{control}$\big)\\
    \phantom{.}\hspace{1cm}\phantom{.}\hspace{1cm}$T^{(2)}_{control}\varleftarrow T^{(1)}_{control}$\\\phantom{.}\hspace{1cm}\phantom{.}\hspace{1cm}$T^{(1)}_{control}\varleftarrow T$\\
    \phantom{.}\hspace{1cm}\textbf{Else}\\
    \phantom{.}\hspace{1cm}\phantom{.}\hspace{1cm} $T^{(1)}_{control}\varleftarrow T^{(2)}_{control}$\\
     \phantom{.}\hspace{1cm}\phantom{.}\hspace{1cm} $K\varleftarrow \max\left(K-2\ dK,K_{min}\right)$\\
     \phantom{.}\hspace{1cm}\phantom{.}\hspace{1cm}\textcolor{gray}{\textit{\big[To ensure that we never drop below $K_{min}$\big]}}\\
     \phantom{.}\hspace{1cm}\phantom{.}\hspace{1cm} $dK\varleftarrow c_{control}\ dK$\\
     \phantom{.}\hspace{1cm}\textbf{End If}\\
     \phantom{.}\hspace{1cm} $N\varleftarrow N+1$\\
     \textbf{End While}
     
\end{itemize}

In the above algorithm, the functions \textbf{Maximize} and \textbf{Find Root} are referring to the automatic numerical processes supported by the Wolfram Mathematica system. It is of course possible to use a custom function, like a Newton-Raphson method, but in our numerical tests the automatic functions were consistently faster and with higher precision.

For our purposes it was sufficient to set $dK_{control}=0.1$, $K_{control}=10^{5}$ (for the spin-orbit case) and $K_{control}=150$ (for the spin-spin-orbit case), $N_{control}=20$, $dK_{initial}=10$, and $c_{control}=1/3$. For the one dimensional case (the spin-orbit model) $T_{control}$ was set to infinity, while for the two dimensional case (the spin-spin-orbit model), we have used $T_{control}=10^{100}$.

Finally, some further safety conditions are applied to ensure that the result is not subject to numerical errors. To this end, we set an acceptable tolerance $tol=10^{-8}$ and we require that $\alpha>tol$. We multiply the analyticity radius $r_S$ by a safety factor, which in our examples is chosen to be $0.99$.
Lastly, initial conditions that are too close to the resonances, meaning that their back-transformation is not a near-identity transformation, or the computation of their initial actions from the relation \equ{omegaspispin} is subject to non convergence issues, are disregarded as resonant.

In Figure ~\ref{fig:Conv}{,} we consider the spin-spin-orbit model and we show the convergence of the optimization process for an initial condition that is further (gray line) or closer (orange line) to an intersection of resonances. In this example, the algorithm is converging to a good approximation of the optimal result in about 15 to 20 steps. 

\begin{figure}[h!]
    \centering
    \includegraphics[width=\linewidth]{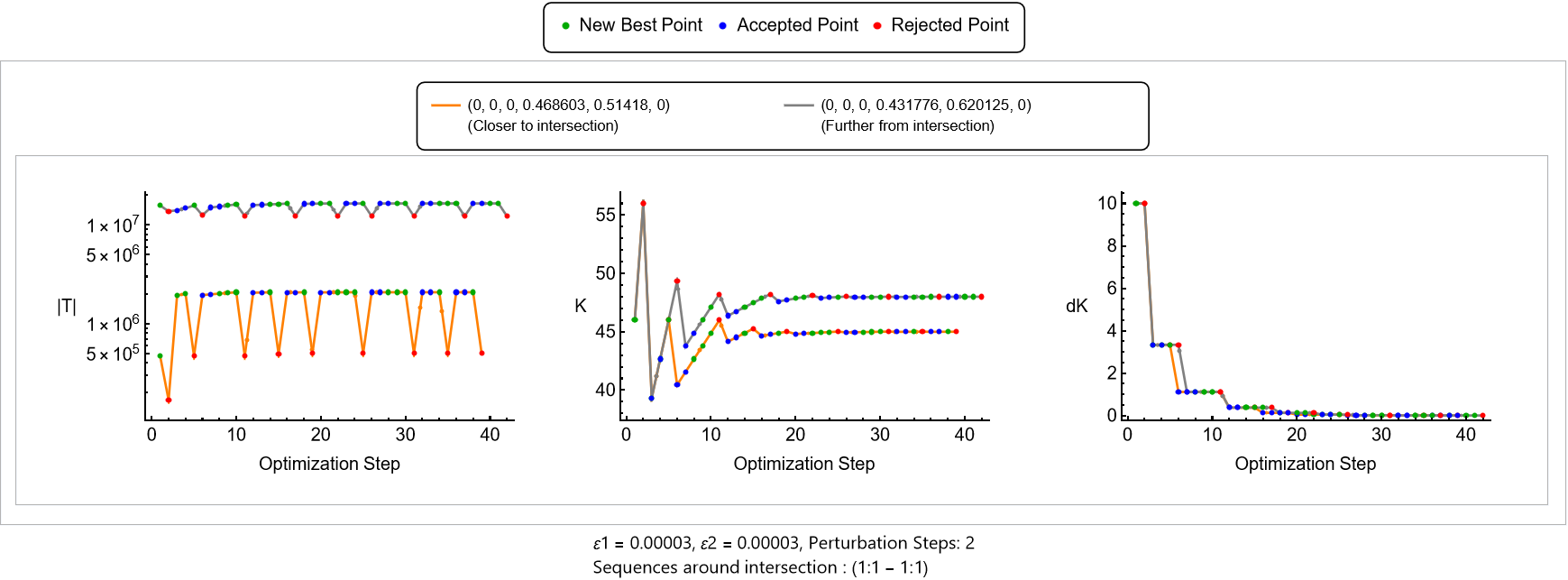}
    \caption{The values of $T$ (left), $K$ (middle), and $dK$ (right) for every step of the optimization process. We indicate with green points the optimization steps that found a new largest value of $T$, with blue the accepted in-between steps, which satisfy the conditions but do not find a new largest $T$, and with red the rejected steps, which do not satisfy at least one of the conditions of Theorem~\ref{thm:nonres}. We display two initial conditions, one closer to an intersection of resonances (orange line) and one further away (gray line).}
    \label{fig:Conv}
\end{figure}

\clearpage

\section{Expansions of the Hamiltonians of the spin-orbit and spin-spin-orbit models}\label{app:spin}

\subsection{Expansion of the spin-orbit Hamiltonian}\label{app:SO}
We expand the Hamiltonian \equ{SpiOrbitHam} in power series of the eccentricity up to degree 5. The resulting Hamiltonian is a function of the mean anomaly $q_1$ and time $q_3$; we denote by $p_1$ the momentum {conjugate} to $q_1$, and we introduce a dummy action $p_3$ {conjugate} to $q_3$. The resulting expression is the following:
\beq{VSOapp}
\begin{array}{ll}
\mathcal{H}(p_1,p_3,q_1,q_3)=&\displaystyle\frac{p_1^2}{2}+p_3 + \varepsilon  \Biggl\{
\left(-\frac{13e^4}{32}+\frac{5 e^2}{4}-\frac{1}{2}\right)\cos(2q_1-2q_3)\\~\\~&\displaystyle +\left(\frac{115e^4}{12}-\frac{17 e^2}{4}\right) \cos(2q_1-4 q_3)+\left(-\frac{11e^5}{1536}-\frac{e^3}{96}\right)\cos(2q_1+q_3)\\~\\~&\displaystyle +\left(\frac{5e^5}{768}-\frac{e^3}{32}+\frac{e}{4}\right)\cos (2 q_1-q_3)
\\~\\~&\displaystyle+\left(-\frac{489
   e^5}{256}+\frac{123 e^3}{32}-\frac{7
   e}{4}\right) \cos (2 q_1-3
   q_3)\\~\\~&\displaystyle +\left(\frac{32525
   e^5}{1536}-\frac{845 e^3}{96}\right) \cos (2
   q_1-5 q_3)-\frac{1}{48}
   e^4 \cos (2 q_1+2
   q_3)\\~\\~&\displaystyle -\frac{533}{32}
   e^4 \cos (2 q_1-6 q_3)-\frac{81 e^5}{2560}\cos
   (2 q_1+3 q_3)
\\~\\~&\displaystyle -\frac{228347 e^5 }{7680}\cos (2
   q_1-7 q_3)\Biggl\}\ . 
\end{array}
\eeq

\newpage

\subsection{Expansion of the spin-spin-orbit Hamiltonian}\label{app:SS}
Similarly to the spin-orbit Hamiltonian, we give below the expansion in power series of the eccentricity for the spin-spin-orbit Hamiltonian \equ{SpinSpinHam}:
\beqa{VSOapp}
\mathcal{H}(\u{p},\u{q})&=&\frac{{p_1}^2}{2 {I_{c}^{(1)}}}+\frac{{p_2}^2}{2 \
{I_{c}^{(2)}}}+{p_3}+m \left(-\frac{5 {I_{c}^{(1)}}^2 \
{\varepsilon_1}^2}{112 {M_{S_1}}^2}-\frac{5 {I_{c}^{(2)}}^2 \
{\varepsilon_2}^2}{112 {M_{S_2}}^2}\right)\nonumber\\
&+&\frac{m}{a^3}\sum_{k=1,2}\frac{I_{c}^{(k)}\varepsilon_k}{M_{S_k}}\Bigg(\
 -\frac{228347 e^5}{7680}\cos (2 {q_k}-7 {q_3})-\frac{81 e^5}{2560}\cos (2 {q_k}+3 {q_3})\nonumber\\
&-&\frac{533 e^4}{32} \cos (2{q_k}-6 {q_3}) -\frac{e^4}{48} \cos (2 {q_k}+2 {q_3})\nonumber\\
&+&\left(\frac{32525 e^5}{1536}-\frac{845 e^3}{96}\right) \cos (2 {q_k}-5 {q_3})+\left(\frac{115 e^4}{12}-\frac{17 \
e^2}{4}\right) \cos (2 {q_k}-4 {q_3})\nonumber\\
&+&\left(-\frac{489 \
e^5}{256}+\frac{123 e^3}{32}-\frac{7 e}{4}\right) \cos (2 {q_k}-3 {q_3})\nonumber\\
&+&\left(-\frac{13 e^4}{32}+\frac{5 e^2}{4}-\frac{1}{2}\right) \cos (2 {q_k}-2 {q_3})\nonumber\\
&+&\left(\frac{5 e^5}{768}-\frac{e^3}{32}+\frac{e}{4}\right) \cos (2 {q_k}-{q_3})+\left(-\frac{11 e^5}{1536}-\frac{e^3}{96}\right) \cos (2 \
{q_k}+{q_3})\Bigg)\nonumber\\
&+&\frac{m}{a^5}\sum_{k=1,2}\left(\frac{I_{c}^{(k)}\varepsilon_k}{M_{S_k}}\right)^2\Bigg(
 -\frac{1045005 e^5}{4096}\cos (4 {q_k}-9 {q_3})-\frac{5 e^5}{36864}\cos (4 {q_k}+{q_3})\nonumber\\
&-&\frac{123575 e^4}{1152}\cos (4 {q_k}-8 {q_3})
+\left(\frac{8822975 e^5}{36864}-\frac{93775 e^3}{2304}\right) \cos (4 {q_k}-7 {q_3})\nonumber\\
&+&\left(\frac{2675 e^4}{32}-\frac{425 \
e^2}{32}\right) \cos (4 {q_k}-6 {q_3})\nonumber\\
&+&\left(-\frac{948125 \
e^5}{18432}+\frac{6375 e^3}{256}-\frac{325 e}{96}\right) \cos (4 {q_k}-5 {q_3})\nonumber\\
&+&\left(-\frac{4975 e^4}{384}+\frac{275 \
e^2}{48}-\frac{25}{48}\right) \cos (4 {q_k}-4 {q_3})\nonumber\\
&+&\left(\frac{3275 e^5}{2048}-\frac{625 e^3}{256}+\frac{25 e}{32}\right) \cos (4 {q_k}-3 {q_3})\nonumber\\
&+&\left(\frac{25 e^4}{144}-\frac{25 e^2}{96}\right) \cos (4 {q_k}-2 {q_3})+\left(\frac{175 e^5}{36864}+\frac{25 e^3}{2304}\right) \
\cos (4 {q_k}-{q_3})\Bigg)\nonumber\\
&+&\frac{m}{a^5}\frac{I_{c}^{(1)}\varepsilon_1}{M_{S_1}} \frac{I_{c}^{(2)}\varepsilon_2}{M_{S_2}}\Bigg(-\frac{1463007 e^5}{2048}\cos(2 {q_1}+2 {q_2}-9 {q_3})
\eeqa

\beqano
&-&\frac{7
e^5}{18432}\cos (2 {q_1}+2 {q_2}+{q_3})\nonumber\\
&-&\frac{19669 e^5}{6144}\cos (2 {q_1}-2 {q_2}-5 {q_3})-\frac{19669 e^5}{6144}\cos (2 {q_1}-2 {q_2}+5 {q_3})\nonumber\\
&-&\frac{173005 e^4}{576} \cos (2 {q_1}+2 {q_2}-8 {q_3}) +\left(-\frac{105 e^4}{64}-\frac{5 e^2}{8}-\frac{1}{8}\right) \cos \
(2 {q_1}-2 {q_2})\nonumber\\
&-&\frac{745 e^4}{384} \cos (2 {q_1}-2 {q_2}-4 {q_3}) 
-\frac{745 e^4}{384} \cos (2 {q_1}-2 {q_2}+4 {q_3})\nonumber\\
&+&\left(\frac{12352165 e^5}{18432}-\frac{131285 e^3}{1152}\right) \cos (2 {q_1}+2 {q_2}-7 {q_3})\nonumber\\
&+&\left(\frac{3745 \
e^4}{16}-\frac{595 e^2}{16}\right) \cos (2 {q_1}+2 {q_2}-6 \
{q_3})\nonumber\\
&+&\left(-\frac{1327375 e^5}{9216}+\frac{8925 e^3}{128}-\frac{455 \
e}{48}\right) \cos (2 {q_1}+2 {q_2}-5 {q_3})\nonumber\\
&+&\left(-\frac{6965
e^4}{192}+\frac{385 e^2}{24}-\frac{35}{24}\right) \cos (2 {q_1}+2 {q_2}-4 {q_3})\nonumber\\
&+&\left(-\frac{4715 e^5}{2048}-\frac{145 e^3}{128}\right) \cos (2 {q_1}-2 {q_2}-3 {q_3})\nonumber\\
&+&\left(-\frac{4715 e^5}{2048}-\frac{145 e^3}{128}\right) \cos (2 {q_1}-2 {q_2}+3 {q_3})\nonumber\\
&+&\left(\frac{4585
e^5}{1024}-\frac{875 e^3}{128}+\frac{35 e}{16}\right) \cos (2 {q_1}+2 {q_2}-3 {q_3})\nonumber\\
&+&\left(-\frac{155 e^4}{96}-\frac{5 e^2}{8}\right) \cos (2 {q_1}-2 {q_2}-2 {q_3})
+\left(-\frac{155 e^4}{96}-\frac{5 e^2}{8}\right) \cos (2
{q_1}-2 {q_2}+2 {q_3})\nonumber\\
&+&\left(-\frac{7285
e^5}{3072}-\frac{135 e^3}{128}-\frac{5 e}{16}\right) \cos (2 {q_1}-2 \
{q_2}-{q_3})\nonumber\\
&+&\left(-\frac{7285 e^5}{3072}-\frac{135 \
e^3}{128}-\frac{5 e}{16}\right) \cos (2 {q_1}-2 \
{q_2}+{q_3})\nonumber\\
&+&\left(\frac{245 e^5}{18432}+\frac{35 e^3}{1152}\right) \
\cos (2 {q_1}+2 {q_2}-{q_3})\nonumber\\
&+&\left(\frac{35 e^4}{72}-\frac{35 e^2}{48}\right) \cos (2 {q_1}+2 {q_2}-2 {q_3})\Bigg)\ . 
\eeqano

\section{Implementation of perturbation theory}\label{app:perturbstep}

We consider the Hamiltonian (\ref{GeneralHamiltonian}), and following Section \ref{sec:PertTheory} we divide the Hamiltonian into: $\H(\u p,\u q)=\N(\u p)+\Z (\u p)+\R_{\underline{k}\in\Lambda}(\u{p},\u{q})+\R_{\underline{k}\in\zed_{K}^n/\Lambda}(\u{p},\u{q})$.
We apply  a Lie canonical transformation defined by 
a generating function $\chi=\chi(\underline{p}^{(1)},\underline{q}^{(1)})$.
The new Hamiltonian is obtained through the following sequence of computations:  
\beqano 
\H^{(1)}(\underline{p}^{(1)},\underline{q}^{(1)})&=&\sum_{j=0}^\infty {1\over {j!}} \L_{\chi(\underline{p}^{(1)},\underline{q}^{(1)})}^j \H(\underline{p}^{(1)},\underline{q}^{(1)})\nonumber\\
&=&\sum_{j=0}^\infty {1\over {j!}} \left(\L_{\chi(\underline{p}^{(1)},\underline{q}^{(1)})}^j \N(\underline{p}^{(1)})+\L_{\chi(\underline{p}^{(1)},\underline{q}^{(1)})}^j \Z(\underline{p}^{(1)})+\L_{\chi(\underline{p}^{(1)},\underline{q}^{(1)})}^j \R(\underline{p}^{(1)},\underline{q}^{(1)})\right)\ ; 
\eeqano 
using \equ{NFeq}, we obtain: 
\beqano 
\H^{(1)}(\underline{p}^{(1)},\underline{q}^{(1)})&=&\N(\underline{p}^{(1)})+\sum_{j=1}^\infty {1\over {j!}} \Bigg(-\L_{\chi(\underline{p}^{(1)},\underline{q}^{(1)})}^{j-1} \R_{\underline{k}\in\zed_{K}^n/\Lambda}(\underline{p}^{(1)},\underline{q}^{(1)})\Bigg)+\sum_{j=0}^\infty {1\over {j!}} \Bigg(\L_{\chi(\underline{p}^{(1)},\underline{q}^{(1)})}^j \Z(\underline{p}^{(1)})\nonumber\\
&&\hspace{1.5cm}+\L_{\chi(\underline{p}^{(1)},\underline{q}^{(1)})}^j \R_{\underline{k}\in\zed_{K}^n/\Lambda}(\underline{p}^{(1)},\underline{q}^{(1)})+\L_{\chi(\underline{p}^{(1)},\underline{q}^{(1)})}^j\R_{\underline{k}\in\Lambda}(\underline{p}^{(1)},\underline{q}^{(1)})\Bigg)\nonumber\\
&=&\N(\underline{p}^{(1)})+\sum_{j=0}^\infty \Bigg(\left(-{1\over {(j+1)!}}+{1\over {j!}}\right) \L_{\chi(\underline{p}^{(1)},\underline{q}^{(1)})}^j \R_{\underline{k}\in\zed_{K}^n/\Lambda}(\underline{p}^{(1)},\underline{q}^{(1)})\nonumber\\&&\hspace{1.5cm}+{1\over {j!}} \left(\L_{\chi(\underline{p}^{(1)},\underline{q}^{(1)})}^j \Z(\underline{p}^{(1)})+\L_{\chi(\underline{p}^{(1)},\underline{q}^{(1)})}^j\R_{\underline{k}\in\Lambda}(\underline{p}^{(1)},\underline{q}^{(1)})\right)\Bigg)\nonumber\\
&=&\N(\underline{p}^{(1)})+\Z(\underline{p}^{(1)})+\R_{\underline{k}\in\Lambda}(\underline{p}^{(1)},\underline{q}^{(1)})\nonumber\\&&\hspace{1.5cm}+\sum_{j=1}^\infty \Bigg({j\over {(j+1)!}} \L_{\chi(\underline{p}^{(1)},\underline{q}^{(1)})}^j \R_{\underline{k}\in\zed_{K}^n/\Lambda}(\underline{p}^{(1)},\underline{q}^{(1)})\nonumber\\&&\hspace{1.5cm}+{1\over {j!}} \left(\L_{\chi(\underline{p}^{(1)},\underline{q}^{(1)})}^j \Z(\underline{p}^{(1)})+\L_{\chi(\underline{p}^{(1)},\underline{q}^{(1)})}^j\R_{\underline{k}\in\Lambda}(\underline{p}^{(1)},\underline{q}^{(1)})\right)\Bigg)\nonumber\\
&=&\N(\underline{p}^{(1)})+\Z(\underline{p}^{(1)})+\R_{\underline{k}\in\Lambda}(\underline{p}^{(1)},\underline{q}^{(1)})+\R'(\underline{p}^{(1)},\underline{q}^{(1)})\ ,
\eeqano 
with
\beqano \R'(\underline{p}^{(1)},\underline{q}^{(1)})&=&\sum_{j=1}^\infty \Bigg({j\over {(j+1)!}} \L_{\chi(\underline{p}^{(1)},\underline{q}^{(1)})}^j \R_{\underline{k}\in\zed_{K}^n/\Lambda}(\underline{p}^{(1)},\underline{q}^{(1)})\nonumber\\&&\hspace{1.5cm}+{1\over {j!}} \left(\L_{\chi(\underline{p}^{(1)},\underline{q}^{(1)})}^j \Z(\underline{p}^{(1)})+\L_{\chi(\underline{p}^{(1)},\underline{q}^{(1)})}^j\R_{\underline{k}\in\Lambda}(\underline{p}^{(1)},\underline{q}^{(1)})\right)\Bigg)\ .
\eeqano 

\section{A bound on the perturbing function}\label{app:norm}
To compute the norm of the perturbing function  $\mathcal{R}(\u{p},\u{q})$ we need to calculate the expression
$$
\norm{\R(\u{p},\u{q})}_{r_0,s_0}=\sup_{\u{p}\in V_{r_0}PD}\sum_{\u{k}\in \K}\abs{R_{\u{k}}(\u{p})}\exp\left(\norm{\u{k}}_1 s_0\right),
$$
where $\K\subseteq\zed^{n}_{K}$ is the set of Fourier indices of $\mathcal{R}(\u{p},\u{q})$. 

The numerical computation of a supremum of a large function over a complex domain is computationally expensive and therefore, whenever possible, it is better to reduce the domain of the maximization. In our model problems, we are interested in, at most, a two-dimensional action space $PD\subseteq\real^2$, which is restricted to a domain of radius $r_0$ around an initial point $\u{p}_0$. Additionally, it is possible to group the terms of the perturbing function that have the same frequency $\norm{\u{k}}_1=k_z$, reducing the computation to 
$$
\norm{\R(\u{p},\u{q})}_{r_0,s_0}=\sum_{z=1}^{N}\left(\sum_{\u{k}\in [k_z]}\sup_{\u{p}\in B_{r_0}(\u{p}_0)}\abs{R_{\u{k}}(\u{p})}\right)\exp\left(k_z s_0\right),\quad [k_z]\equiv\left\{\u{k}\in\K : \norm{\u{k}}_1=k_z\right\},
$$
with $N$ the number of $[k_z]$ sets.

Furthermore, in the Hamiltonians \equ{SpiOrbitHam} and \equ{SpinSpinHam}, the coefficients of the trigonometric terms are constant. As a result, after the application of a number of perturbation steps, the actions will only appear in the denominators of the coefficients $R_{\u{k}}$ taking the following form
\beq{Rkp}
R_{\u{k}}(\u{p})=\sum_{j=1}^{N_{\u{k}}}\prod_{\xi=1}^{M^{\u{k}}_{j}}\frac{A_{\xi,j}^{\u{k}}}{(B_{\xi,j}^{\u{k}}+\u{C}_{\xi,j}^{\u{k}}\cdot\u{p})^{D_{\xi,j}^{\u{k}}}},
\eeq
where $N_{\u{k}}$ is the number of terms in $R_{\u{k}}(\u{p})$, $M^{\u{k}}_{j}$ is the number of terms at the denominator, $A_{\xi,j}^{\u{k}},B_{\xi,j}^{\u{k}},D_{\xi,j}^{\u{k}}\in\real$, and $\u{C}_{\xi,j}^{\u{k}}\in\real^n$.

Finally, we can further reduce the search for a maximum by looking for the extrema of the function in \equ{Rkp}. More specifically, given the complex expression of $\u{p}=\u{a}+i\u{b}$, $\u{a},\u{b}\in\real^n$, the coefficients $|R_{\underline{k}}(\u{p})|$ have the form:
\beq{defRj}
\abs{R_{\underline{k}}(\u{p})}=\left|\sum_{j=1}^{N_{\u{k}}}\prod_{\xi=1}^{M^{\u{k}}_{j}}\frac{A^{\u k}_{\xi,j}}{\left(\left(B^{\u k}_{\xi,j}+\u{C}^{\u k}_{\xi,j}\cdot \u{a}\right)+i\left(\u{C}^{\u{k}}_{\xi,j}\cdot\u{b}\right)\right)^{D^{\u k}_{\xi,j}}}\right|\ , \qquad \u{k}\in\K.
\eeq
To simplify the notation, denote by $\chi^{\u{k}}_{\xi,j}(\u{b}) \equiv \u{C}^{\u{k}}_{\xi,j}\cdot\u{b}, \quad
F^{\u{k},\u{a}}_{\xi,j} \equiv B^{\u{k}}_{\xi,j} + \u{C}^{\u{k}}_{\xi,j}\cdot\u{a}$. 

Using the binomial formula for each term of the product, we obtain
\beq{newdefRj}
\abs{R_{\underline{k}}(\u{p})}=\abs{\sum_{j=1}^{N_{\u{k}}}\sum_{\u{s}=\u{0}}^{\u{D}^{\u{k}}_{j}}(i)^{\norm{\u{s}}_1}\tilde{K}_{j,\u{s}}^{\u{k},\u{a}} f^{\u{k},\u{a}}_{j,\u{s}}(\u{b})}
\eeq
with

\begin{equation*}
    \begin{array}{l}
        \displaystyle\tilde{K}_{j,\u{s}}^{\u{k},\u{a}} \equiv \prod_{s_\xi\in\u{s}}(-1)^{s_\xi}\binom{D^{\u{k}}_{\xi,j}}{s_\xi}A^{\u{k}}_{\xi,j}\left(F^{\u{k},\u{a}}_{\xi,j}\right)^{D^{\u{k}}_{\xi,j}-s_\xi}  \\
         ~\\
         \displaystyle f^{\u{k},\u{a}}_{j,\u{s}}(\u{b}) = \prod_{s_\xi\in\u{s}}f^{\u{k},\u{a}}_{j,s_\xi}\equiv \prod_{s_\xi\in\u{s}}\frac{\left(\chi^{\u{k}}_{\xi,j}(\u{b})\right)^{s_\xi}}{\left(\left(F^{\u{k},\u{a}}_{\xi,j}\right)^{2}+\left(\chi^{\u{k}}_{\xi,j}(\u{b})\right)^{2}\right)^{D^{\u{k}}_{\xi,j}}}\ .
    \end{array}
\end{equation*}
Then, we split \equ{newdefRj} into its real and imaginary parts,
$$
 \abs{R_{\underline{k}}(\u{p})}=\abs{\sum^{\u{D}^{\u{k}}_{j}}_{\norm{\u{s}}_1, \text{even}} \sum_{j=1}^{N_{\u{k}}} \tilde K^{\u{k},\u{a}}_{j,\u{s}} (-1)^{[\norm{s}_1/2]} f_{j,\u{s}}^{\u{k},\u{a}}(\u{b})+ i\sum^{\u{D}^{\u{k}}_{j}}_{\norm{\u{s}}_1, \text{odd}}\sum_{j=1}^{N_{\u{k}}}\tilde K^{\u{k},\u{a}}_{j,\u{s}}(-1)^{[\norm{\u{s}}_1/2]} f_{j,\u{s}}^{\u{k},\u{a}}(\u{b})}.
$$
Let us determine the location of the extrema on the imaginary plane, setting $\u{a}$ constant and considering the derivative over $\u{b}$. If we denote by 
$$
u(\u{b}) \equiv \textrm{Re}\left(\abs{R_{\underline{k}}(\u{p})}\right), \qquad
v(\u{b}) \equiv \textrm{Im}\left(\abs{R_{\underline{k}}(\u{p})}\right),
$$
then, the partial derivative of $R_{\underline{k}}(\u{p})$ is 
\beq{derRj}
\frac{\partial \abs{R_{\u k}(\u{p})}}{\partial b_{h}}=\frac{u(\u{b})u'(\u{b})+v(\u{b}) v'(\u{b})}{\sqrt{u^{2}(\u{b})+v^{2}(\u{b})}} \qquad\text{for} \quad h=1,2
\eeq
with 
\beqano
u'(\u{b})&\equiv&\frac{\partial u(\u{b})}{\partial b_{h}}=\sum^{\u{D}^{\u{k}}_{j}}_{\norm{\u{s}}_1, \textrm{even}}\sum_{j=1}^{N_{\u{k}}}\tilde K^{\u{k},\u{a}}_{j,\u{s}}(-1)^{[\norm{\u{s}}_1/2]} \frac{\partial f^{\u{k},\u{a}}_{j,\u{s}}(\u{b})}{\partial b_{h}} \nonumber\\ 
v'(\u{b})&\equiv&\frac{\partial v(\u{b}) }{\partial b_{h}}=\sum^{\u{D}^{\u{k}}_{j}}_{\norm{\u{s}}_1, \textrm{odd}}\sum_{j=1}^{N_{\u{k}}}\tilde K^{\u{k},\u{a}}_{j,\u{s}}(-1)^{[\norm{\u{s}}_1/2]}\frac{\partial f^{\u{k},\u{a}}_{j,\u{s}}(\u{b})}{\partial b_{h}}\ .
\eeqano
Computing the zeros of \equ{derRj} can be reduced to finding the zeros of $u'(\u{b})$ and $v'(\u{b})$, which are the critical points of $f^{\u{k},\u{a}}_{j,\u{s}}(\u{b})$. These are found by the points $\u{b}\in\real^n$ where the derivatives of $f^{\u{k},\u{a}}_{j,\u{s}}$ vanish:
\beq{equationsdfb}
\frac{\partial f^{\u{k},\u{a}}_{j,\u{s}}(\u{b})}{\partial b_h}=\sum_{\xi=1}^{M_{j}^{\u{k}}}\prod_{\substack{s_d\in \u{s}\\d\neq \xi}}f_{j,s_d}^{\u{k},\u{a}}(\u{b})\frac{\partial f_{j,s_\xi}^{\u{k},\u{a}}(\u{b})}{\partial b_h}=0, \quad \forall \quad  
\begin{array}{l}
     \u{s}=\{s_\xi\}_{\xi=1}^{M_j^{\u{k}}},  \ s_{\xi}=1,...,D^{\u{k}}_{\xi,j} \\
    ~\\
    j=1,...,N_{\u{k}}, \ \u{k}\in\K.
\end{array}
\eeq
Except for some special cases, only the trivial solution with all terms $\partial_{b_h} f_{j,s_\xi}^{\u{k},\u{a}}(\u{b})=0$ satisfy equations \equ{equationsdfb} at the same time for all set of indexes $\u s$, $j$, $\u k$ and for all values of $\u a$. These solutions are equivalent to {requiring} that 
\beq{sol1}
\left\{\begin{array}{l}
     \displaystyle\chi^{\u{k}}_{\xi,j}(\u{b})=0  \\
     ~\\
    \textrm{\textbf{or}}
     ~\\
     \displaystyle\chi^{\u{k}}_{\xi,j}(\u{b})=\pm S^{\u{k}}_{\xi,j}=\pm \sqrt{\frac{s_\xi \left(F^{\u{k},\u{a}}_{\xi,j}\right)^2}{(2D^{\u{k}}_{\xi,j}-s_\xi)}}
\end{array}\right. \quad 
\begin{array}{ll}
     \forall& \ s_\xi=1,...,D^{\u{k}}_{\xi,j}, \ \ \xi=1,..., M^{\u{k}}_{j}  \\
     ~\\
     ~& \ j=1,..,N_{\u{k}}, \ \ \u{k}\in\K.
\end{array}
\eeq
Let us assume that $\chi^{\u{k}}_{\xi,j}(\u{b})=0$ is a maximum. Considering that the perturbing function is a continuous and smooth function over the complex domain of definition, that for $\norm{\u{b}}\varrightarrow\infty$ we get $\abs{R_{k}}\varrightarrow$ constant, and that the solutions $\chi^{\u{k}}_{\xi,j}(\u{b})=\pm S^{\u{k}}_{\xi,j}$ are appearing symmetrically around $\chi^{\u{k}}_{\xi,j}(\u{b})=0$, we can claim that along any section of the domain, any solutions $\chi^{\u{k}}_{\xi,j}(\u{b})=\pm S^{\u{k}}_{\xi,j}$, if they exist, have to be minima or saddle points. This statement is based on the following argument. Consider all $\{b_d\}_{d\neq r_1}$ and $\{a_d\}_{d\neq r_2}$ to be constant, then on the plane $(a_{r_2},b_{r_1})$ consider a curve that is perpendicular to the solution $\chi^{\u{k}}_{\xi,j}(\u{b})=0$. Along this curve the function has a maximum on $\chi^{\u{k}}_{\xi,j}(\u{b})=0$ and tends to a constant value asymptotically when $b_{r_1}\varrightarrow\pm\infty$. As a consequence, the extrema $\chi^{\u{k}}_{\xi,j}(\u{b})=\pm S^{\u{k}}_{\xi,j}$, appearing symmetrically around the maximum, have to be either minima or saddle points, by the continuity and smoothness of the function $\abs{R_{k}}$. As a result, if the manifold $\chi^{\u{k}}_{\xi,j}(\u{b})=0$ defines a maximum, then it also defines the global maximum for each section corresponding to a given value of $\u{a}$.

Generally, the system of equations $\chi^{\u{k}}_{\xi,j}(\u{b})=0, \  \forall \xi=1,..., M^{\u{k}}_{j}, \ j=1,.., N_{\u{k}}, \ \ \u{k}\in\K$ defines a large number of equations for the determination of a vector $\u{b}\in\real^n$, much larger than the dimension $n$ in our sample models (in which $n$ is less or equal than 2). 
As before, with some possible special exceptions, only the trivial solutions $\chi^{\u{k}}_{\xi,j}(\u{b})=0$ for $\u{b}=\u{0}$ satisfy all equations.

Finally, we proceed to show that the extrema $\u{b}=\u{0}$ are indeed maxima. Recalling that $ u'(\u 0)=v'(\u 0)=0$, we evaluate the second partial derivatives at $\u{b}=\u{0}$ as 
\beq{secderRj}
\left.\frac{\partial^{2}|R_{\underline{k}}|}{\partial b^{2}_{h}}\right|_{\u b=\u{0}}=\left.\frac{(u(\u{b})u^{\prime\prime}(\u{b})+v(\u{b})v^{\prime\prime}(\u{b}))}{\sqrt{u^{2}(\u{b})+v^{2}(\u{b})}}\right|_{\u b=\u{0}}\ ,
\qquad  \qquad h=1,2 
\eeq
with 
\beqano
u''(\u{b})&=&\frac{\partial^2 u(\u{b})}{\partial b_{h}^2}=\sum^{\u{D}^{\u{k}}_{j}}_{\norm{\u{s}}_1, \textrm{even}}\sum_{j=1}^{N_{\u{k}}}\tilde K^{\u{k},\u{a}}_{j,\u{s}}(-1)^{[\norm{\u{s}}_1/2]} \frac{\partial^2 f^{\u{k},\u{a}}_{j,\u{s}}(\u{b})}{\partial b_{h}^2} \nonumber\\ 
v''(\u{b})&=&\frac{\partial^2 v(\u{b}) }{\partial b_{h}^2}=\sum^{\u{D}^{\u{k}}_{j}}_{\norm{\u{s}}_1, \textrm{odd}}\sum_{j=1}^{N_{\u{k}}}\tilde K^{\u{k},\u{a}}_{j,\u{s}}(-1)^{[\norm{\u{s}}_1/2]}\frac{\partial^2 f^{\u{k},\u{a}}_{j,\u{s}}(\u{b})}{\partial b_{h}^2}\ .
\eeqano
All terms with $\u{s}\neq\u{0}$ vanish, since $\chi_{{\xi},j}^{\u{k}}(\u{b})=0$. As a consequence, we have 
$$ \left.\frac{\partial^{2}\abs{R_{\underline{k}}}}{\partial b^{2}_{h}}\right|_{\u b=\u{0}}=\frac{1}{{|R_{\underline{k}}|}}\sum_{j=1}^{N_{\u{k}}}\sum_{j'=1}^{N_{\u{k}}}\tilde{K}^{\u{k},\u{a}}_{j,\u{0}}\tilde{K}^{\u{k},\u{a}}_{j',\u{0}} f^{\u{k},\u{a}}_{j,\u{0}}(\u{0})\left.\frac{\partial ^{2}f^{\u{k},\u{a}}_{j',\u{0}}(\u{b})}{\partial b^{2}_{h}}\right|_{\u b=\u{0}},
$$
whose sign is given by 
\beq{signsecderR}
\sgn\left(\left.\frac{\partial^{2}\abs{R_{\underline{k}}}}{\partial b^{2}_{h}}\right|_{\u b=\u{0}}\right) = \sgn\left(\sum_{j=1}^{N_{\u{k}}}\sum_{j'=1}^{N_{\u{k}}}\tilde{K}^{\u{k},\u{a}}_{j,\u{0}}\tilde{K}^{\u{k},\u{a}}_{j',\u{0}} f^{\u{k},\u{a}}_{j,\u{0}}(\u{0})\left.\frac{\partial ^{2}f^{\u{k},\u{a}}_{j',\u{0}}(\u{b})}{\partial b^{2}_{h}}\right|_{\u b=\u{0}}\right) 
\eeq
with 
\beq{secderf}
\left\{\begin{array}{l}
     \displaystyle\left.\frac{\partial ^{2}f^{\u{k},\u{a}}_{j',\u{0}}(\u{b})}{\partial b^{2}_{h}}\right|_{\u{b}=\u{0}} = \sum_{\xi=1}^{M_j^{\u{k}}}\prod_{\substack{d=1\\d\neq\xi}}^{M_j^{\u{k}}}\frac{1}{\left(F^{\u{k},\u{a}}_{d,j'}\right)^{2D^{\u{k}}_{d,j'}}}\frac{-2D^{\u{k}}_{\xi,j'} \left(C^{\u{k}}_{\xi,j',h}\right)^{2}}{\left(F^{\u{k},\u{a}}_{\xi,j'}\right)^{2(D^{\u{k}}_{\xi,j'}+1)}}  \\
     ~\\
     \displaystyle f^{\u{k},\u{a}}_{j',\u{0}}(\u{0}) = \prod_{\xi=1}^{M_j^{\u{k}}}\frac{1}{\left(F^{\u{k},\u{a}}_{\xi,j}\right)^{2D^{\u{k}}_{\xi,j}}}
\end{array}\right. .
\eeq
Using the relations \equ{signsecderR} and \equ{secderf}, we verify numerically that 
$$
\left.\frac{\partial^{2}\abs{R_{\underline{k}}}}{\partial b^{2}_{h}}\right|_{\u{b}=\u{0}}<0 
$$
for the Hamiltonians \equ{SpiOrbitHam} and \equ{SpinSpinHam}, as well as for the Hamiltonians resulting from the application of perturbation theory. Due to the fact that all mixed derivatives are zero (namely the off-diagonal terms of the Hessian matrix), the Hessian is negative definite at $\u{b}=\u{0}$. Therefore, as we have shown previously, $\u{b}=\u{0}$ is a global maximum for the Hamiltonians \equ{SpiOrbitHam} and \equ{SpinSpinHam}, which implies that the supremum of each $|R_{\underline{k}}(\u{p})|$ is located in the domain $\u{p}\in P_{\real^n}B_{r_0}(\underline{p}_{0})$.

 Finally, since $|R_{\underline{k}}(\underline{p})|$ is analytic in the domain $B_{r_0}(\underline{p}_0)$, by the maximum modulus principle the maximum appears on the boundary $\partial B_{r_0}(\underline{p}_0)$. However, we found that the supremum is located in $\u{p}\in P_{\real^n}B_{r_0}(\underline{p}_{0})$. Therefore, the maximum will appear on their intersection $\u{p}\in \partial (P_{\real^n}B_{r_0}(\underline{p}_{0}))$, further restricting the domain where the maximum is located.

In two dimensions, using a polar parametrization $(r,\theta)$ for $(a_1,a_2)$, we can write the conclusion as  
\begin{equation*}
\sup_{B_{r_0}(\underline{p}_{0})}|R_{\underline{k}}(a_{1},a_{2},b_{1},b_{2})|=\sup_{(a_{1},a_{2})\in \partial\left(P_{\real^2}B_{r_0}(\u p_0)\right)}|R_{\underline{k}}(a_{1},a_{2},0,0)|=\sup_{\theta\in [0,2\pi ] }|R_{\underline{k}}(\theta,r_
{0})|.
\end{equation*}
In one dimension, the search for the maximum is reduced to the comparison of just two points $a_1=p_0\pm r_0$.

\bibliographystyle{abbrv}
\bibliography{biblioAAA}

@article {Moser62,
MR = MR26:5255,
AUTHOR = {Moser, Jurgen},
TITLE = {On invariant curves of area-preserving mappings of an annulus},
JOURNAL = {Nachr. Akad. Wiss. G\"ottingen Math.-Phys. Kl. II},
VOLUME = {1962},
YEAR = {1962},
PAGES = {1--20},
}

@article {Arnold63a,
AUTHOR = {{A}rnol'd, Vladimir I.},
TITLE = {Proof of a theorem of {A}. {N}. {K}olmogorov on the invariance
of quasi-periodic motions under small perturbations},
JOURNAL = {Russian Math. Surveys},
FJOURNAL = {Russian Mathematical Surveys},
VOLUME = {18},
YEAR = {1963},
NUMBER = {5},
PAGES = {9--36},
}

@book {Celletti2010,
    AUTHOR = {Celletti, Alessandra},
     TITLE = {Stability and Chaos in Celestial Mechanics},
 PUBLISHER = {Springer-Verlag, Berlin; published in association with Praxis
              Publishing, Chichester},
      YEAR = {2010},
     PAGES = {xvi+261},
      ISBN = {978-3-540-85145-5},
   MRCLASS = {70-02 (37J40 37N05 70F05 70F15 70H14 70M20)},
  MRNUMBER = {2571993},
MRREVIEWER = {Manuele Santoprete},
       DOI = {10.1007/978-3-540-85146-2},
       URL = {http://dx.doi.org/10.1007/978-3-540-85146-2},
}

@article {Nekhoroshev77,
AUTHOR = {Nekhoroshev, Nikolay N.},
TITLE = {An exponential estimate of the time of stability of nearly
integrable {H}amiltonian systems},
NOTE = {English translation:
{\em Russian Math. Surveys}, 32(6):\-1--65, 1977},
JOURNAL = {Uspehi Mat. Nauk},
VOLUME = {32},
YEAR = {1977},
NUMBER = {6(198)},
PAGES = {5--66, 287},
MRCLASS = {58F05 (34D05 70.34)},
MRNUMBER = {58 #18570},
MRREVR = {Ju. E. Gliklih},
}

@article {Poschel93,
AUTHOR = {P{\"o}schel, Jurgen},
TITLE = {Nekhoroshev estimates for quasi-convex {H}amiltonian systems},
JOURNAL = {Math. Z.},
FJOURNAL = {Mathematische Zeitschrift},
VOLUME = {213},
YEAR = {1993},
NUMBER = {2},
PAGES = {187--216},
ISSN = {0025-5874},
CODEN = {MAZEAX},
MRCLASS = {58F05 (34C99 58F30 70H05)},
MRNUMBER = {94m:58089},
MRREVR = {A. Giorgilli},
}

@article {Kolmogorov54,
AUTHOR = {Kolmogorov, Andrej N.},
TITLE = {On conservation of conditionally periodic motions for a small
change in {H}amilton's function},
NOTE = {English translation in
{\it Stochastic Behavior in Classical and Quantum Hamiltonian
Systems (Volta Memorial Conf., Como, 1977)},
Lecture Notes in Phys., 93,
pages 51--56. Springer, Berlin, 1979},
JOURNAL = {Dokl. Akad. Nauk SSSR (N.S.)},
VOLUME = {98},
YEAR = {1954},
PAGES = {527--530},
MRCLASS = {36.0X},
MRNUMBER = {16,924c},
MRREVR = {Y. N. Dowker},
}

@article{Greene79,
AUTHOR = {J. M. Greene},
TITLE =  {A method for determining a stochastic transition},
JOURNAL = {Jour. Math. Phys.},
FJOURNAL = {Journal of Mathematical Physics},
VOLUME = {20},
PAGES = {1183--1201},
YEAR = {1979},
}

@article {Chirikov79,
AUTHOR = {Chirikov, B.V.},
TITLE = {A universal instability of many-dimensional oscillator systems},
JOURNAL = {Phys. Rep.},
FJOURNAL = {Physics Reports. Section C of Physics Letters},
VOLUME = {52},
YEAR = {1979},
NUMBER = {5},
PAGES = {264--379},
ISSN = {0370-1573},
CODEN = {PRPLCM},
MRCLASS = {70K20 (58F05 70H05)},
MRNUMBER = {80h:70022},
MRREVR = {Richard C. Churchill},
}

@preamble{
"\def\cprime{$'$} "
}

@article {Celletti90I,
    AUTHOR = {Celletti, Alessandra},
     TITLE = {Analysis of resonances in the spin-orbit problem in celestial
              mechanics: the synchronous resonance. {I}},
   JOURNAL = {Z. Angew. Math. Phys.},
  FJOURNAL = {Zeitschrift f\"ur Angewandte Mathematik und Physik. ZAMP.
              Journal of Applied Mathematics and Physics. Journal de
              Math\'ematiques et de Physique Appliqu\'ees},
    VOLUME = {41},
      YEAR = {1990},
    NUMBER = {2},
     PAGES = {174--204},
      ISSN = {0044-2275},
   MRCLASS = {70F15 (58F05 70H05 70K30)},
  MRNUMBER = {1045811},
MRREVIEWER = {James A. Murdock},
       DOI = {10.1007/BF00945107},
       URL = {http://dx.doi.org/10.1007/BF00945107},
}

@ARTICLE{Wisdom,
       author = {{Wisdom}, J. and {Peale}, S.~J. and {Mignard}, F.},
        title = "{The chaotic rotation of Hyperion}",
      journal = {Icarus},
         year = 1984,
        month = may,
       volume = {58},
       number = {2},
        pages = {137-152},
          doi = {10.1016/0019-1035(84)90032-0},
       adsurl = {https://ui.adsabs.harvard.edu/abs/1984Icar...58..137W}
}

@article {CellettiFL04,
    AUTHOR = {Celletti, A. and Falcolini, C. and Locatelli, U.},
     TITLE = {On the break-down threshold of invariant tori in four
              dimensional maps},
   JOURNAL = {Regul. Chaotic Dyn.},
  FJOURNAL = {Regular \& Chaotic Dynamics. International Scientific Journal},
    VOLUME = {9},
      YEAR = {2004},
    NUMBER = {3},
     PAGES = {227--253},
      ISSN = {1560-3547},
   MRCLASS = {37J40 (37J10 65P40 70K43 70K55)},
  MRNUMBER = {2104170},
MRREVIEWER = {Sergei A. Dovbysh},
       DOI = {10.1070/RD2004v009n03ABEH000278},
       URL = {https://doi.org/10.1070/RD2004v009n03ABEH000278},
}

@ARTICLE{CG,
       author = {{Celletti}, Alessandra and {Giorgilli}, Antonio},
        title = "{On the stability of the {L}agrangian points in the spatial restricted problem of three bodies}",
      journal = {Celestial Mechanics and Dynamical Astronomy},
         year = 1990,
        month = mar,
       volume = {50},
       number = {1},
        pages = {31-58},
          doi = {10.1007/BF00048985},
       adsurl = {https://ui.adsabs.harvard.edu/abs/1990CeMDA..50...31C}
}

@ARTICLE{GS,
       author = {{Giorgilli}, Antonio and {Skokos}, Charalampos},
        title = "{On the stability of the Trojan asteroids.}",
      journal = {Astron. $\&$ Astroph.},
         year = 1997,
        month = jan,
       volume = {317},
        pages = {254-261},
       adsurl = {https://ui.adsabs.harvard.edu/abs/1997A&A...317..254G}
}

@ARTICLE{CDE,
       author = {{Celletti}, Alessandra and {De Blasi}, Irene and {Efthymiopoulos}, Christos},
        title = "{Nekhoroshev estimates for the orbital stability of Earth's satellites}",
      journal = {Celestial Mechanics and Dynamical Astronomy},
         year = 2023,
        month = apr,
       volume = {135},
       number = {2},
          eid = {10},
        pages = {10},
          doi = {10.1007/s10569-023-10124-9},
       adsurl = {https://ui.adsabs.harvard.edu/abs/2023CeMDA.135...10C}
}

@ARTICLE{CF,
       author = {{Celletti}, Alessandra and {Ferrara}, Laura},
        title = "{An Application of the Nekhoroshev Theorem to the Restricted Three-Body Problem}",
      journal = {Celestial Mechanics and Dynamical Astronomy},
         year = 1996,
        month = sep,
       volume = {64},
       number = {3},
        pages = {261-272},
          doi = {10.1007/BF00728351},
       adsurl = {https://ui.adsabs.harvard.edu/abs/1996CeMDA..64..261C}
}

@ARTICLE{Lochak,
       author = {{Lochak}, P. and {Neishtadt}, A.~I.},
        title = "{Estimates of stability time for nearly integrable systems with a quasiconvex Hamiltonian}",
      journal = {Chaos},
         year = 1992,
        month = oct,
       volume = {2},
       number = {4},
        pages = {495-499},
          doi = {10.1063/1.165891},
       adsurl = {https://ui.adsabs.harvard.edu/abs/1992Chaos...2..495L}
}

@article{goldreich1966spin,
  title={Spin-orbit coupling in the solar system},
  author={Goldreich, Peter and Peale, Stanton},
  journal={Astronomical Journal, Vol. 71, p. 425 (1966)},
  volume={71},
  pages={425},
  year={1966}
}

@book {Khinchin,
	AUTHOR = {Khinchin, A.Ya.},
	TITLE = {Continued Fractions},
	PUBLISHER = {Dover Publications},
	ADDRESS = {Mineola, New York},
	YEAR = {1964},
}

@BOOK{Ferraz,
       author = {{Ferraz-Mello}, Sylvio},
        title = "{Canonical Perturbation Theories - Degenerate Systems and Resonance}",
         year = 2007,
       volume = {345},
          doi = {10.1007/978-0-387-38905-9},
       adsurl = {https://ui.adsabs.harvard.edu/abs/2007ASSL..345.....F}
}

@ARTICLE{Laplata,
       author = {{Efthymiopoulos}, C.},
        title = "{Canonical perturbation theory; stability and diffusion in Hamiltonian systems: applications in dynamical astronomy}",
      journal = {Workshop Series of the Asociacion Argentina de Astronomia},
         year = 2011,
        month = jan,
       volume = {3},
        pages = {3-146},
       adsurl = {https://ui.adsabs.harvard.edu/abs/2011WSAAA...3....3E}
}

@ARTICLE{CGM,
       author = {{Celletti}, Alessandra and {Gimeno}, Joan and {Misquero}, Mauricio},
        title = "{The Spin-Spin Problem in Celestial Mechanics}",
      journal = {Journal of NonLinear Science},
         year = 2022,
        month = dec,
       volume = {32},
       number = {6},
          eid = {88},
        pages = {88},
          doi = {10.1007/s00332-022-09840-7},
archivePrefix = {arXiv},
       eprint = {2110.11152},
 primaryClass = {math.DS},
       adsurl = {https://ui.adsabs.harvard.edu/abs/2022JNS....32...88C}
}

@book{Whittaker_Watson_2021, 
place={Cambridge}, 
edition={5}, 
title={A Course of Modern Analysis}, 
publisher={Cambridge University Press}, 
author={Whittaker, E. T. and Watson, G. N.}, 
editor={Moll, Victor H.Editor}, 
year={2021}}

@BOOK{Hagihara_1970_V1,
       author = {{Hagihara}, Yusuke},
        title = "{Celestial mechanics. Vol.1: Dynamical principles and transformation theory}",
         year = {1970},
       adsurl = {https://ui.adsabs.harvard.edu/abs/1970ceme.book.....H}
}

@ARTICLE{Efthymiopoulos_Giorgilli_Contopoulos_2004,
       author = {{Efthymiopoulos}, C. and {Giorgilli}, A. and {Contopoulos}, G.},
        title = "{Nonconvergence of formal integrals: II. Improved estimates for the optimal order of truncation}",
      journal = {Journal of Physics A Mathematical General},
         year = 2004,
        month = nov,
       volume = {37},
       number = {45},
        pages = {10831-10858},
          doi = {10.1088/0305-4470/37/45/008},
       adsurl = {https://ui.adsabs.harvard.edu/abs/2004JPhA...3710831E}
}

@ARTICLE{BCL25,
       author = {{Bustamante}, Adri{\'a}n P. and {Celletti}, Alessandra and {Lhotka}, Christoph},
        title = "{The dynamics of the spin{\textendash}spin problem in Celestial Mechanics}",
      journal = {Communications in Nonlinear Science and Numerical Simulations},
         year = 2025,
        month = mar,
       volume = {142},
          eid = {108548},
        pages = {108548},
          doi = {10.1016/j.cnsns.2024.108548},
       adsurl = {https://ui.adsabs.harvard.edu/abs/2025CNSNS.14208548B}
}

@ARTICLE{Misquero,
       author = {{Misquero}, Mauricio},
        title = "{The spin-spin model and the capture into the double synchronous resonance}",
      journal = {Nonlinearity},
         year = 2021,
        month = apr,
       volume = {34},
       number = {4},
        pages = {2191-2219},
          doi = {10.1088/1361-6544/abc4d8},
archivePrefix = {arXiv},
       eprint = {2010.09354},
 primaryClass = {math.DS},
       adsurl = {https://ui.adsabs.harvard.edu/abs/2021Nonli..34.2191M}
}

@article{Lochack92,
author = {P Lochak},
title = {Canonical perturbation theory via simultaneous approximation},
journal = {Russian Mathematical Surveys},
year = {1992},
month = {dec},
volume = {47},
number = {6},
pages = {57},
doi = {10.1070/RM1992v047n06ABEH000965},
url = {https://dx.doi.org/10.1070/RM1992v047n06ABEH000965},
publisher = {},

}

@article{Bounemoura2011,
author = {Bounemoura, Abed and Marco, Jean-Pierre},
title = {Improved exponential stability for near-integrable quasi-convex Hamiltonians},
journal = {Nonlinearity},
year = {2010},
month = {nov},
volume = {24},
number = {1},
pages = {97},
doi = {10.1088/0951-7715/24/1/005},
url = {https://dx.doi.org/10.1088/0951-7715/24/1/005},
publisher = {},

}

@article{Zhang2011,
author = {Zhang, Ke},
title = {Speed of Arnold diffusion for analytic Hamiltonian systems},
journal = {Inventiones Mathematicae},
year = {2011},
month = {11},
volume = {186},
pages = {255-290},
doi = {10.1007/s00222-011-0319-6}
}

@article{Zhang2017,
author = {Zhang, Jianlu and Zhang, Ke},
title = {Improved stability for analytic quasi-convex nearly integrable systems and optimal speed of Arnold diffusion},
journal = {Nonlinearity},
year = {2017},
month = {jun},
volume = {30},
number = {7},
pages = {2918},
doi = {10.1088/1361-6544/aa72b7},
url = {https://dx.doi.org/10.1088/1361-6544/aa72b7},
publisher = {IOP Publishing},
}

@ARTICLE{GuChieBen2014,
       author = {{Guzzo}, Massimiliano and {Chierchia}, Luigi and {Benettin}, Giancarlo},
        title = "{The steep Nekhoroshev's Theorem and optimal stability exponents (an announcement)}",
      journal = {Rendiconti Lincei. Scienze Fisiche e Naturali},
         year = 2014,
        month = aug,
       volume = {25},
       number = {3},
        pages = {293-299},
          doi = {10.4171/rlm/679},
archivePrefix = {arXiv},
       eprint = {1403.6776},
 primaryClass = {math-ph},
       adsurl = {https://ui.adsabs.harvard.edu/abs/2014RLSFN..25..293G}
}

@ARTICLE{boue,
       author = {{Bou{\'e}}, Gwena{\"e}l},
        title = "{The two rigid body interaction using angular momentum theory formulae}",
      journal = {Celestial Mechanics and Dynamical Astronomy},
         year = 2017,
        month = jun,
       volume = {128},
       number = {2-3},
        pages = {261-273},
          doi = {10.1007/s10569-017-9751-2},
archivePrefix = {arXiv},
       eprint = {1612.02556},
 primaryClass = {astro-ph.EP},
       adsurl = {https://ui.adsabs.harvard.edu/abs/2017CeMDA.128..261B}
}

@ARTICLE{Maciejewski,
       author = {{Maciejewski}, Andrzej J.},
        title = "{Reduction, Relative Equilibria and Potential in the Two Rigid Bodies Problem}",
      journal = {Celestial Mechanics and Dynamical Astronomy},
         year = 1995,
        month = mar,
       volume = {63},
       number = {1},
        pages = {1-28},
          doi = {10.1007/BF00691912},
       adsurl = {https://ui.adsabs.harvard.edu/abs/1995CeMDA..63....1M}
}

@ARTICLE{Lei2024,
       author = {{Lei}, Hanlun},
        title = "{Spin{\textendash}orbit coupling of the primary body in a binary asteroid system}",
      journal = {Celestial Mechanics and Dynamical Astronomy},
         year = 2024,
        month = oct,
       volume = {136},
       number = {5},
          eid = {37},
        pages = {37},
          doi = {10.1007/s10569-024-10211-5},
archivePrefix = {arXiv},
       eprint = {2407.21274},
 primaryClass = {astro-ph.EP},
       adsurl = {https://ui.adsabs.harvard.edu/abs/2024CeMDA.136...37L}
}

@ARTICLE{PSV2024,
       author = {{Pinzari}, Gabriella and {Scoppola}, Benedetto and {Veglianti}, Matteo},
        title = "{Spin orbit resonance cascade via core shell model: application to Mercury and Ganymede}",
      journal = {Celestial Mechanics and Dynamical Astronomy},
         year = 2024,
        month = oct,
       volume = {136},
       number = {5},
          eid = {39},
        pages = {39},
          doi = {10.1007/s10569-024-10207-1},
archivePrefix = {arXiv},
       eprint = {2402.07650},
 primaryClass = {math-ph},
       adsurl = {https://ui.adsabs.harvard.edu/abs/2024CeMDA.136...39P}
}

@BOOK{Cassels,
author = {Cassels, J.W.S.},
title = {An introduction to Diophantine approximation},
year = {1957},
publisher = {Cambridge University Press}
}

\end{document}